\def\XXint#1#2#3{{\setbox0=\hbox{$#1{#2#3}{\int}$}
\vcenter{\hbox{$#2#3$}}\kern-.5\wd0}}
\theoremstyle{plain}
\newtheorem{thm}{Theorem}[section]
\newtheorem{cor}[thm]{Corollary}
\newtheorem{lem}[thm]{Lemma}
\newtheorem{fact}[thm]{Fact}
\newtheorem{rem}[thm]{Remark}
\newtheorem{defn}[thm]{Definition}
\newtheorem{claim}[thm]{Claim}
\newcommand{\N}{\mathbb{N}}
\newcommand{\Z}{\mathbb{Z}}
\newcommand{\R}{\mathbb{R}}
\newcommand{\dist}{\operatorname{dist}}
\newcommand{\espan}{\operatorname{span}}
\newcommand{\Ker}{\operatorname{Ker}}
\def\entier{\,\,\hbox{{\rm N}\kern-.9em\hbox{{\rm I}}}\,\,\,}
\def\reel{\,\,\hbox{{\rm R}\kern-.9em\hbox{{\rm I}}}\,\,\,}
\def\sn{\smallskip\noindent}
\def\mn{\medskip\noindent}
\begin{document}

\title{Normal tilings of a Banach space and its ball}

\author{Robert Deville}
\address{Institut de Mathématiques de Bordeaux, Université de Bordeaux 1, 33405, Talence, France
} \email{robert.deville@math.u-bordeaux.fr
}

\author{Miguel García-Bravo}
\address{ICMAT (CSIC-UAM-UC3-UCM), Calle Nicol\'as Cabrera 13-15.
28049 Madrid, Spain} \email{miguel.garcia@icmat.es}

\thanks{M. Garc\'ia-Bravo was supported by Programa Internacional de Doctorado Fundación La Caixa–Severo Ochoa 2016 and partially by MTM2015-65825-P}

\date{January, 2020}

\keywords{Normal tilings, starshaped, uniform convex spaces}

\begin{abstract}
We show some new results about tilings in Banach spaces. A tiling of a Banach space $X$ is a covering by closed sets with non-empty interior so that they have pairwise disjoint interiors. If moreover the tiles have inner radii uniformly bounded from below, and outer radii uniformly bounded from above, we say that the tiling is normal.

In 2010 Preiss constructed a convex normal tiling of the separable Hilbert space. For Banach spaces with Schauder basis we will show that Preiss' result is still true with starshaped tiles instead of convex ones. Also, whenever $X$ is uniformly convex we give precise constructions of convex normal tilings of the unit sphere, the unit ball or in general of any convex body.
\end{abstract}

\maketitle

\section{Introduction} 

\begin{defn}
{\em Let $X$ be a Banach space.  A {\em tiling} of $X$ is a family  $\{C_{\alpha}\}$ of closed sets with non-empty interior that covers  $X$, and such that, if $\alpha\ne \beta$,
then $C_\alpha\cap C_\beta$ has empty interior. The tiling is said to be convex if the tiles are convex and starshaped if the tiles are starshaped. And the tiling is normal if there exists some $R>r>0$ so that each tile $C_{\alpha}$ contains a point $c_\alpha$ for which  $B(c_\alpha,r)\subset C_\alpha\subset B(c_\alpha,R)$.} 
\end{defn}

To get a normal tiling by convex sets of the finite dimensional Hilbert space, one just have to take a maximal family $D=\{d_i:\,i\in \N\}$ satisfying $\Vert d_i-d_j\Vert\geq 2$ for $i\neq j$ and define the Voronoi cells
$$
C_i=\{x\in \R^n:\,\Vert x-d_i\Vert=\dist (x,D)\}.$$
Then one can check that $B(d_i,1)\subset C_i\subset B(d_i,2)$ for every $i\in \N$. We must note two facts: this construction is not possible in infinite dimensions since we lack compactness of the unit ball; and for other non-hilbertian  norms in $\R^n$ the Voronoi cells are not necessarily convex sets.

\medskip

The theory of tilings in infinite-dimensional normed spaces is much less developed than the one in finite dimensions. The first one who considered this problem in infinite dimensions for the case of non-separable Banach spaces was Klee in 1981 (\cite{Klee1} and see also \cite[pp. 423]{Klee2}). He proved that the spaces $\ell_p(\Gamma)$ where $\Gamma$ is an infinite cardinal  such that $\Gamma^{\aleph_0}=\Gamma$, 
contain a $2^{1/p}-$dispersed proximinal set $D$ (that is, every $x\in \ell_p(\Gamma)$ has a, not necessarily unique, nearest point in $D$). Then the Voronoi cells defined as 
$$C_d=\{x\in \ell_p(\Gamma):\,\dist (x,d)=\dist (x,D)\}, $$
with $d\in D$, define a normal tiling made out of unit balls. In the case of $\ell_1(\Gamma)$ the set of centres form a Chebyshev set meaning that all the balls of our tiling are pairwise disjoint. This was the first example of a discrete Chebyshev set in a normed space of infinite dimension.

For the case of separable Banach spaces things are more complicated. An easy case is that of $c_0$: just consider the set $A=\{(a_n)\in c_{00}:\,a_n\in 2\Z\}$ and the balls $C_a=\{x\in c_0:\,\Vert x-a\Vert_\infty \leq 1\}$. Then $\{C_a\}_{a\in A}$ form a convex normal tiling which is moreover locally finite. However, by a result due to Corson \cite{Corson}, there do not exist locally finite tilings by  bounded convex sets in reflexive spaces. This fact was generalized in \cite{Fonf}, where it was proved that for separable Banach spaces, admitting locally finite tilings by bounded convex sets is equivalent to being isomorphic to a polyhedral space. 
Furthermore, for a Hilbert space, where the construction through Voronoi cells starting from a proximinal discrete set worked well in finite dimensions and in the non-separable case (Klee's result), is impossible in the separable case (see \cite[Proposition 2.1]{FoLi}).

These previous observations tell us that finding normal tilings in separable Banach spaces of infinite dimensions should require an involved construction.

\medskip

In the search of normal tilings in infinite-dimensional Banach  spaces we should highlight the result of Fonf, Pezzotta and Zanco, \cite[Theorem 2.2]{FPZ}. They showed that any normed space admits a tiling by bounded convex sets, with uniformly bounded inner radii. As an easy consequence, for the case of spaces with the Radon Nikodym property, in \cite[Proposition 2.8]{FoLi} it is pointed out that we can get tilings by convex sets with uniformly bounded outer radii (but not uniformly inner radii at the same time).

It was in 2010 when D. Preiss, in his paper \cite{Preiss}, found the first example of a convex normal tiling  of the separable Hilbert space. However the question whether we can find such tilings for other separable Banach spaces like $\ell_p$, $p\neq 2$, remains open.

\medskip

In this paper, following Preiss' construction, we prove that for any Banach space having Schauder basis there exists a normal tiling by starshaped tiles. This is done in Section \ref{Tiling Banach spaces with Schauder basis}.

We are also interested in finding tilings of other type of sets, different of the whole space. In particular, for the case of uniformly convex Banach spaces $X$ we can find a normal tiling of the unit sphere $\{S_\alpha\}\subset S_X$ whose tiles are the intersection of convex sets with the unit sphere. We will also show that it is possible to find convex normal tilings of the unit ball, or in general of any bounded convex set with non-empty interior. We build it up by creating uniform slices of the convex body, where we control both the inner and outer radius of the slice at the same time. This corresponds to Sections \ref{Tiling of the unit sphere} and \ref{Tiling of convex bodies} respectively. 

Finally in Section \ref{Normal tiling of the unit ball of $L_1$} we give a normal tiling of the unit sphere and unit ball of $L_1$, with tiles not necessarily convex or starshaped.

\section{Tiling Banach spaces with Schauder basis}\label{Tiling Banach spaces with Schauder basis}

\begin{thm}\label{tiling Schauder basis} Let $X$ be a Banach space having a Schauder basis $(e_k)_{k\geq 0}$.
Then $X$ admits a starshaped normal tiling $\{C_n\}_{n\geq 1}$.
\end{thm}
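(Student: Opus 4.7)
The plan is to emulate Preiss's construction for the separable Hilbert space, using the Schauder basis projections $P_n$ in place of orthogonal projections and settling for starshaped rather than convex tiles. Write $K = \sup_n \|P_n\| < \infty$ for the basis constant, $X_n = P_n(X)$ for the finite-dimensional subspace spanned by $e_0,\dots,e_n$, and recall that $P_n x \to x$ for every $x \in X$, so $\|x - P_n x\| \to 0$.

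First I would choose, for each $n \geq 1$, a convex normal tiling $\{T^{(n)}_\alpha\}_\alpha$ of $X_n$ (for instance cubes or Voronoi cells of a suitable lattice) with centers $c^{(n)}_\alpha \in X_n$ and comparable inner radius $r$ and outer radius $R$, arranging the centers so that each $c^{(n)}_\alpha$ has a uniformly bounded-below $e_n$-component; this ensures that $\|(I - P_k) c^{(n)}_\alpha\|$ is bounded below for every $k < n$. Next I would pick a rapidly decreasing sequence $\tau_n \searrow 0$ and stratify $X$ by the level function
\[
n(x) := \min\{n \geq 1 : \|x - P_n x\| \leq \tau_n\},
\]
which is well defined since $P_n x \to x$. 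The candidate tile associated with the pair $(n,\alpha)$ is
\[
C^{(n)}_\alpha := \{x \in X : n(x) = n \text{ and } P_n x \in T^{(n)}_\alpha\}.
\]
These form a countable family covering $X$; interiors are disjoint because different levels yield disjoint strata, and distinct $\alpha$ at a fixed level are separated by the convex tiling of $X_n$.

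Normality follows from two bounds. The outer inclusion $C^{(n)}_\alpha \subset B(c^{(n)}_\alpha, R + \tau_1)$ comes from $\|x - c^{(n)}_\alpha\| \leq \|P_n x - c^{(n)}_\alpha\| + \|(I - P_n) x\|$. The inner inclusion $B(c^{(n)}_\alpha, \rho) \subset C^{(n)}_\alpha$, for a uniform $\rho > 0$, reduces to checking on a small ball that $P_n y \in T^{(n)}_\alpha$ (via $\|P_n\| \leq K$ and the inner radius $r$), that $\|(I - P_n) y\| \leq \tau_n$ (immediate), and that $\|(I - P_k) y\| > \tau_k$ for every $k < n$ (via $\|(I - P_k) y\| \geq \|(I - P_k) c^{(n)}_\alpha\| - (1 + K)\rho$ together with the bounded-below condition on the centers).

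The main obstacle is showing that $C^{(n)}_\alpha$ is starshaped with respect to $c := c^{(n)}_\alpha$. On the segment $y(t) = c + t(x - c)$, the two conditions $P_n y(t) \in T^{(n)}_\alpha$ (convexity of the finite-dimensional tile) and $\|(I - P_n) y(t)\| = t\|(I - P_n) x\| \leq \tau_n$ persist trivially. The delicate point is preservation of the lower bounds $\|(I - P_k) y(t)\| > \tau_k$ for $k < n$; writing
\[
(I - P_k) y(t) = (1 - t)(I - P_k) c + t (I - P_k) x,
\]
the two summands may cancel in a non-Hilbertian norm, which is precisely why the conclusion is starshaped rather than convex. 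To push through the bound I would tune $\tau_n$ and the centers so that the first summand dominates when $t$ is small and the second when $t$ is close to $1$, and use the reverse triangle inequality to rule out cancellation in between. The recursive bookkeeping needed to make this work simultaneously across all $n$ and all $k < n$ is the main technical bottleneck.
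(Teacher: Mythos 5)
There is a genuine gap, and it occurs at the step you label ``(immediate)''. Your level-$n$ tile is contained in the slab $S_n=\{x:\Vert (I-P_n)x\Vert\le\tau_n\}$. If a ball $B(y,\rho)$ lies in $S_n$, then for the unit vector $u=e_{n+1}/\Vert e_{n+1}\Vert$ both $y\pm\rho u$ lie in $S_n$, whence $\rho=\rho\Vert(I-P_n)u\Vert\le\tau_n$. So no tile at level $n$ can contain a ball of radius larger than $\tau_n$, and with $\tau_n\searrow 0$ the resulting tiling is not normal: the inner radii necessarily degenerate. The inclusion $\Vert(I-P_n)y\Vert\le\tau_n$ for all $y\in B(c^{(n)}_\alpha,\rho)$ with a \emph{uniform} $\rho$ is exactly what fails. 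One could try to repair this by taking $\tau_n\equiv\tau$ constant (the level function is still well defined since $P_nx\to x$), but you explicitly rely on the decay of $\tau_n$ for the starshapedness bookkeeping, so the two halves of your argument pull in opposite directions.

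The second, deeper problem is the one you yourself flag: starshapedness. Your strata are carved out by the conditions $\Vert(I-P_k)x\Vert>\tau_k$ for $k<n$, i.e.\ by complements of convex slabs, and on the segment from $c^{(n)}_\alpha$ to a point $x$ of the tile the vector $(1-t)(I-P_k)c^{(n)}_\alpha+t(I-P_k)x$ can indeed nearly cancel; no amount of tuning of the $\tau_n$ obviously prevents this simultaneously for all $k<n$, and you give no argument. The paper's proof is designed precisely to avoid this: instead of imposing ``$x$ is \emph{not} at any earlier level'' inside each tile, it tiles every tail space $W_k=\overline{\espan}\{e_m:m>k\}$ by \emph{convex} sets $\{H^k_j\}_{j\ge0}$ (built from a two-dimensional pattern via the functionals $e^*$ and $v_j^*\circ Q$, Lemma \ref{tiling of W_k}), with $H^k_0$ playing the role of ``small tail'', and defines $C^k_{i,j}=P_k^{-1}(D^k_i)\cap Q_k^{-1}(H^k_j)\cap\bigcap_{m>k}Q_m^{-1}(H^m_0)$. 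Every factor except the finite-dimensional $D^k_i$ is convex, so $C^k_{i,j}=D^k_i+\bigl(H^k_j\cap\bigcap_{m>k}Q_{k,m}^{-1}(H^m_0)\bigr)$ is a starshaped set plus a convex set, hence starshaped for free; the exclusion of earlier levels is recovered only a posteriori, from the fact that $H^m_0\cap H^m_q$ has empty interior when $q\ge1$. Moreover $B_{W_m}(0,1)\subset H^m_0$ together with $\Vert Q_{m}h^k_j\Vert\le 1-r$ is what gives the uniform inner radius that your shrinking slabs destroy. To complete a proof along these lines you would need to replace your non-convex level constraints by such a convex tiling of the tails; as written, the construction neither yields normality nor starshapedness.
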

This theorem has been proved by David Preiss whenever $X$ is a separable Hilbert space. 
In that case, the tiles were convex. The tiles obtained whenever $X$ is a
separable Banach space having a Schauder basis are no longer necessarily convex, but they remain starshaped.
The proof presented below is a variant of the proof given by David Preiss.
 
We first present two lemmas:

\begin{lem}\label{tiling of V_k}
If $V$ is a finite dimensional normed space and $r>0$,  
there exists a starshaped tiling $\{D_i:\,i\in\N\}$ of $V$ and points 
$d_i\in D_i$ such that for each $i$, 
$B(d_i,r)\subset D_i\subset B(d_i,2r)$. 
\end{lem}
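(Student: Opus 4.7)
The plan is to adapt the classical Voronoi construction. First I would choose $D = \{d_i : i \in \N\}$, a maximal $2r$-separated subset of $V$; such a set exists by Zorn's lemma and is countable because $V$ is finite-dimensional (hence separable). Define the candidate tile
\[
D_i := \{x \in V : \|x - d_i\| \leq \|x - d_j\| \text{ for all } j\}.
\]
By maximality of $D$, every point of $V$ lies within distance $2r$ of $D$, hence $D_i \subset B(d_i, 2r)$. Conversely, if $\|x - d_i\| \leq r$ then $\|x - d_j\| \geq \|d_i - d_j\| - \|x - d_i\| \geq 2r - r \geq \|x - d_i\|$ for every $j \neq i$, so $B(d_i, r) \subset D_i$.

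The main geometric input, valid in any normed space, is that $D_i$ is starshaped with respect to $d_i$. For $s \in [0,1]$ and $x \in D_i$, put $y = d_i + s(x - d_i)$. The identity $y - d_j = (x - d_j) - (1 - s)(x - d_i)$ together with the triangle inequality and the defining inequality $\|x - d_i\| \leq \|x - d_j\|$ yields
\[
\|y - d_j\| \geq \|x - d_j\| - (1 - s)\|x - d_i\| \geq s\|x - d_i\| = \|y - d_i\|,
\]
so $y \in D_i$. Coverage $\bigcup_i D_i = V$ follows from local compactness: for each $x \in V$ the set $\{d_j : \|x - d_j\| \leq 2r\}$ is finite and contains a point attaining $\dist(x, D)$.

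The delicate point is to ensure that the tiles have pairwise empty-interior intersections. In a general, non-strictly-convex norm such as $\ell_\infty$, the bisector $\{x : \|x - d_i\| = \|x - d_j\|\}$ can have non-empty interior, so two raw Voronoi cells may overlap on an open set and fail to form a tiling. To circumvent this I would introduce a tiebreaker using an auxiliary strictly convex norm $\|\cdot\|_\ast$ on $V$ (for instance one induced by an inner product on the finite-dimensional space $V$) and redefine
\[
D_i := \overline{\{x \in V : (\|x - d_i\|, \|x - d_i\|_\ast) \leq_{\text{lex}} (\|x - d_j\|, \|x - d_j\|_\ast) \text{ for all } j\}}.
\]
Strict convexity of $\|\cdot\|_\ast$ forces the refined bisectors to be lower-dimensional in $V$, producing the required empty-interior intersections, while the inclusion and starshapedness arguments of the previous paragraphs rely only on the triangle inequality in $\|\cdot\|$ and carry over verbatim. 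The main technical obstacle is verifying that after taking closures we still have $D_i \subset B(d_i, 2r)$, that $D_i$ is starshaped at $d_i$, and that the collection still covers $V$; this is a compactness and limit argument using that in each bounded region only finitely many $d_j$ compete.
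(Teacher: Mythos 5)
Your construction is sound and reaches the conclusion by a genuinely different resolution of the one real difficulty, which you correctly isolate: in a non-strictly-convex norm the bisector $\{x:\Vert x-d_i\Vert=\Vert x-d_j\Vert\}$ can have nonempty interior, so the raw Voronoi cells may overlap on open sets. The paper breaks ties by index priority, setting $D_i=\overline{V_i\setminus\bigcup_{j<i}V_j}$; disjointness of interiors is then essentially immediate, and the price is paid in the starshapedness proof, where one must show that moving from $x\in V_i\setminus V_j$ towards $d_i$ stays strictly outside $V_j$ --- done there via convexity of $\lambda\mapsto\Vert x+\lambda d_i\Vert$. Your lexicographic tiebreak with an auxiliary strictly convex norm $\Vert\cdot\Vert_\ast$ shifts the work elsewhere, and two of the steps you describe as carrying over ``verbatim'' in fact need a few extra lines. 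First, starshapedness of the lex cell at $d_i$: your triangle-inequality computation only yields $\Vert y-d_j\Vert\ge\Vert y-d_i\Vert$; in the equality case (which forces $\Vert x-d_i\Vert=\Vert x-d_j\Vert$ and hence, by membership of $x$ in the cell, $\Vert x-d_i\Vert_\ast\le\Vert x-d_j\Vert_\ast$) you must run the same computation a second time for $\Vert\cdot\Vert_\ast$ to conclude $\Vert y-d_i\Vert_\ast\le\Vert y-d_j\Vert_\ast$ --- it works, but it is an additional case, not a verbatim repetition. Second, disjoint interiors after closure: from $\overline{E_i}\cap\overline{E_j}\subset\{x:\Vert x-d_i\Vert=\Vert x-d_j\Vert\}$ one sees that any open set $U$ in the intersection lies in the interior of that bisector, where membership in $E_i$ versus $E_j$ is decided by the $\ast$-distances alone, forcing $U$ into the $\ast$-bisector; you then need the true but not free fact that bisectors of a strictly convex norm have empty interior (e.g.\ because $t\mapsto\Vert v+tu\Vert_\ast$ is strictly convex off the line through $d_i$ and $d_j$). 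The remaining items you flag are indeed routine: $E_i\subset V_i\subset B(d_i,2r)$ survives closure since the ball is closed, and the open ball of radius $r$ about $d_i$ lies in $E_i$ by the strict inequality $\Vert x-d_j\Vert>r>\Vert x-d_i\Vert$, so the closed ball lies in $D_i$. In short, this is a correct alternative proof once those two verifications are written out; the paper's index tiebreak buys a trivial disjointness argument and avoids the auxiliary norm entirely, at the cost of the convexity-along-lines lemma, while yours keeps the cells symmetric in $j$ at the cost of the bisector analysis.
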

\begin{proof} Let $\{d_i:\,i\in\N\}$ be a maximal family in $V$ such that whenever $i\ne j$,
$\Vert d_i-d_j\Vert\ge 2r$. The Voronoi sets are the sets
$V_i=\bigcap_j\{x\in V:\,\Vert x-d_i\Vert\le \Vert x-d_j\Vert\}$, which are closed, form a covering of $V$, and for each $i\in\N$, $B(d_i,r)\subset V_i\subset B(d_i,2r)$. 

If $V$ is a finite dimensional Hilbert space, the sets $V_i$ form the desired tiling. Indeed, in this case, the tiles $V_i$ have pairwise disjoint interior and are convex: $V_i$ is the intersection of the half spaces  $\{x\in V:\,(d_j-d_i,x)\le(\Vert d_j\Vert^2-\Vert d_i\Vert^2)/2\}$. 

If $V$ is an arbitrary finite dimensional normed space, we define 
$$D_i=\overline{V_i\setminus \bigcup_{j<i}V_j}.$$
The sets $D_i$ are closed, they form a covering of $V$ and for each $i\in\N$, $B(d_i,1)\subset D_i\subset B(d_i,2)$. Also the tiles $D_i$ have pairwise disjoint interiors. In fact if $j<i$,

$$\buildrel\circ\over D_i \cap \buildrel\circ\over D_j\subset \left( \overline{V_i\setminus V_j}\right) \cap \buildrel\circ\over V_j=\emptyset.$$
To check the last equality take $x\in \buildrel\circ\over V_j$, let $r>0$ so that $\buildrel\circ\over B(x,r)\subset V_j$, and hence $\overline{V_i\setminus V_j}\subset V_i\setminus \buildrel\circ\over B(x,r)$, which means that $x\notin \overline{V_i\setminus V_j}$.

To end, let us prove that $D_i$ is starshaped with respect to $d_i$, for which is enough that $V_i\setminus \bigcup_{j<i}V_j$ is starshaped. Take $x\in V_i\setminus \bigcup_{j<i}V_j$ and $t\in [0,1]$.
\begin{itemize}
\item $td_i+(1-t)x\in V_i$ because $V_i$ is starshaped with respect to $d_i$ and $x\in V_i$.
Let us prove that  $V_i$ is starshaped with respect to $d_i$. Without loss of generality, we can assume that $d_i=0$. 
If $x\in V_i$ and if $s\in [0,1]$, then, for each $j\in\N$, $\Vert x\Vert\le\Vert x-d_j\Vert$, and so:
$$
\Vert sx\Vert=(s-1)\Vert x\Vert+\Vert x\Vert\le
(s-1)\Vert x\Vert+\Vert (1-s)x+sx-d_j\Vert\le
\Vert sx-d_j\Vert,
$$
therefore $sx\in V_i$ and consequently, $V_i$ is starshaped. 
\item In order to prove that $td_i+(1-t)x\notin V_j$ for every $j<i$, is enough to check that
\begin{equation}\label{starshapedness}
\Vert td_i+(1-t)x-d_i\Vert <\Vert td_i+(1-t)x-d_j\Vert. 
\end{equation}
For fixed $j<i$, we assume without loss of generality that $d_j=0$.
Define the function $F:\R\to \R$ by $F(\lambda)=\Vert x+\lambda d_i\Vert $ and observe that proving \eqref{starshapedness} is equivalent to proving that $F(\lambda)>\Vert x-d_i\Vert $ for $\lambda\geq 0$. Note that $x\in V_i\setminus V_j$, hence $\Vert x-d_i\Vert<\Vert x\Vert$. Therefore $F(-1)=\Vert x-d_i\Vert <\Vert x\Vert =F(0)$ and since $F$ is convex we automatically have that $F(\lambda)>F(-1)=\Vert x-d_i\Vert$ for every $\lambda \geq 0$, hence \eqref{starshapedness} is proved. 
\end{itemize}
\end{proof}

\begin{lem}\label{tiling of W_k}
Let $W$ be a separable Banach space and let $Q$ be a 
norm $1$ projection of $W$ onto a one codimensional subspace $\widetilde{W}$ of $W$.
Then there exists a tiling $\{H_j\}_{j\ge 0}$ of $W$, points 
$h_j\in H_j$ and $0<r<1<R_0$  such that :
\begin{enumerate}
\item $B_{W}(0,1)\subset H_0\subset B_{W}(0,R_0)$

\item $\Vert Qh_j\Vert\le 1-r$

\item $B_{W}(h_j,r)\subset H_j$

\item for all $h\in H_j$, $\Vert h-h_j\Vert\le 4+2\Vert Qh\Vert$.

\end{enumerate}

\end{lem}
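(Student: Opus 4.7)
My plan is to use the direct-sum decomposition $W=\widetilde W\oplus \Ker Q$; since $\widetilde W$ has codimension one, $\Ker Q$ is one-dimensional, so I can pick a unit vector $e\in\Ker Q$ and define the functional $\phi\in W^*$ by $(I-Q)h=\phi(h)\,e$. The norm bound $\|I-Q\|\le 1+\|Q\|=2$ gives $\|\phi\|\le 2$, and every $h\in W$ decomposes as $h=Qh+\phi(h)\,e$ with $\|h\|\le\|Qh\|+|\phi(h)|$. Placing all centres on the axis $\R e$ (so that $Qh_j=0$) will take care of condition~(2) immediately, for any $r<1$.

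Concretely, I would fix $r=1/4$, $R_0=2$, and, for $n\in\Z$, define the slabs $S_n=\phi^{-1}([4n-2,\,4n+2])$. Because $\|\phi\|\le 2$ the width~$4$ is enough to make $B_W(0,1)\subset S_0$, and consecutive slabs meet only along the hyperplanes $\phi=4n\pm 2$. The family of tiles will consist of the central ball $H_0=B_W(0,1)$ with $h_0=0$; the peripheral piece $H_1=S_0\setminus\interior{B_W(0,1)}$ with $h_1=\tfrac{3}{2}e$; and, after relabelling $\Z\setminus\{0\}$ by indices $j\ge 2$, the full slabs $H_j=S_n$ with $h_j=4ne$. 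These closed sets cover $W$, and their interiors are pairwise separated either by the slab boundaries or, inside $S_0$, by the unit sphere.

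Conditions~(3) and~(4) then reduce to the one-line estimate $\|h-h_j\|\le\|Qh\|+|\phi(h)-\phi(h_j)|$ (via $Qh_j=0$). For~(3), any $v$ with $\|v\|\le r$ satisfies $|\phi(v)|\le 2r=1/2$, so $B_W(h_j,r)$ stays inside the $\phi$-slab that defines $H_j$; the supplementary check for $H_1$ is $\|h_1\|-r=5/4>1$, which keeps $B_W(h_1,r)$ clear of the open unit ball. For~(4) one has $|\phi(h)-\phi(h_j)|\le 2$ when $H_j$ is a full slab and $|\phi(h)-\phi(h_j)|\le 7/2$ when $j=1$, so in every case $\|h-h_j\|\le\|Qh\|+7/2\le 4+2\|Qh\|$.

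The genuinely delicate point is condition~(1): $H_0$ must be bounded, yet the natural central tile (the whole slab $S_0$) is unbounded in the $\widetilde W$ direction. The fix is to truncate $H_0$ down to the unit ball and hand the leftover $S_0\setminus B_W(0,1)$ to the auxiliary tile $H_1$. The placement $h_1=\tfrac{3}{2}e$ is then essentially forced on us by the three simultaneous constraints $\|Qh_1\|=0\le 1-r$, $B_W(h_1,r)\subset S_0\setminus B_W(0,1)$ (demanding both a $2r$-margin against $\phi=\pm 2$ and $\|h_1\|>1+r$), and control of $|\phi(h)-\phi(h_1)|$ in~(4); these margins are precisely what fixes the choice $r=1/4$.
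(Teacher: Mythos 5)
Your construction does establish the statement as written: the slabs $S_n=\phi^{-1}([4n-2,4n+2])$ together with the splitting of the central slab into $H_0=B_W(0,1)$ and $H_1=S_0\setminus\interior{B_W(0,1)}$ form a closed cover of $W$ whose pairwise overlaps lie in hyperplanes or in the unit sphere, and your estimates for (1)--(4) with $r=1/4$, $R_0=2$ are all correct. But this is a genuinely different route from the paper's, and the difference matters. The paper starts from the same decomposition $w=Qw+e^*(w)e$ and uses the same outer slabs $H_n=\{w:\,|e^*(w)-4n|\le 2\}$; inside the central slab, however, it does real work: using a norming system $(v_j,v_j^*)$ in $\widetilde W$ (Fact~\ref{maximal family}) and the planar pattern of Fact~\ref{tiling of R^2}, it cuts $\{|e^*(w)|\le 2\}$ into one bounded tile $H_0=\bigcap_j\pi_j^{-1}(U_0)$ and countably many tiles $H_j^p$, each an intersection of preimages of convex sets under continuous linear maps, hence convex. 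Your shortcut instead takes $H_0$ to be the unit ball and dumps the whole remainder of the central slab into the single tile $H_1$, which is not convex and not even starshaped with respect to $h_1=\tfrac{3}{2}e$ (the segment from $-2e\in H_1$ to $h_1$ passes through $0\in\interior{B_W(0,1)}$). The lemma as stated does not demand convexity, so your proof is formally complete; but convexity of the $H_j$ is precisely what the lemma is consumed for in the proof of Theorem~\ref{tiling Schauder basis}, where the tiles $C_{i,j}^k=D_i^k+\bigl(H_j^k\cap\bigcap_{m>k}Q_{k,m}^{-1}(H_0^m)\bigr)$ are shown to be starshaped because the second summand is convex; with your $H_1$ that step fails. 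In short, your argument buys simplicity (no Facts~\ref{tiling of R^2} and~\ref{maximal family}, no use of separability), while the paper's buys convex tiles, which is the actual content needed downstream.
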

\begin{proof}
We shall use the following elementary facts. 

\begin{fact}\label{tiling of R^2}
{\em Let $D=[-1,1]^2$, and define 
$$
U_0=\{(x,y):\,\vert x\vert+\vert y\vert\le 2\}\qquad
U_1=\{(x,y):\,0\le x\le 2,\, y\ge 0,\text{ and }x+y\ge 0\}
$$
$$
U_2=\{(x,y):\,(x,-y)\in U_1\}\quad
U_3=\{(x,y):\,(-x,y)\in U_1\}\quad
U_4=\{(x,y):\,(-x,-y)\in U_1\}.
$$
$\{U_0,U_1,U_2,U_3,U_4\}$ is a tiling of $\{(x,y)\in\R^2:\,\vert x\vert\le 2\}$ and there exists $a>0$, $b>0$,
and $0<r,\delta<1$ such that : 
\begin{enumerate}[(a)]
\item $D\subset U_0\subset 2D$

\item $(a,b)+rD\subset U_1\cap \{x,y):\,\vert y\vert \le 1\}$

\item If $\vert t\vert\le \delta b$, then $(a,t)+rD\subset U_0$.
\end{enumerate}
}
\end{fact}

The following picture should convince the reader that Fact \ref{tiling of R^2} holds, corresponding to the values $(a,b)=(\frac 3 2, \frac 5 6)$, $r=\frac 1 6$ and $\delta=\frac 1 5$. 

\begin{center}
\begin{tikzpicture}[scale=0.85]
\draw (8.5,3)  -- (11.5,0) -- (8.5,-3) -- (5.5,0) --  cycle;
\draw[dashed] (3,0) -- (14,0);
\draw[dashed] (8.5,-3.5) -- (8.5,3.5);
\draw[dashed] (3,1.5) -- (14,1.5);
\draw (5.5,3.5) -- (5.5,-3.5);
\draw (11.5,3.5) -- (11.5,-3.5);
\draw (10.5,1) -- (10.5,1.5) -- (11,1.5) -- (11,1) -- cycle;
\draw (10.5,0.5) -- (10.5,0) -- (11,0) -- (11,0.5) -- cycle;
\draw (8.5,3) -- (8.5,3.5);
\draw (8.5,-3) -- (8.5,-3.5);
\draw (10.75,1.25) -- (10.75,0.25);
\draw[dashed] (10,1.5) -- (10,-1.5) -- (7,-1.5) -- (7,1.5) -- cycle;
\node at (3.3,1.75) {1};
\node at (5.15,-0.25) {-2};
\node at (6.7,-0.25) {-1};
\node at (3.3,0.25) {0};
\node at (9,0.5) {\huge $U_0$};
\node at (10.5,2.3) {\huge $U_1$};
\node at (10.5,-2.3) {\huge $U_2$};
\node at (6.5,2.3) {\huge $U_3$};
\node at (6.5,-2.3) {\huge $U_4$};
\node at (10.75,1.35) {\scalebox{0.55}{(a,b)}};
\node at (8.05,3) {2};
\node at (11.8,-0.25) {2};
\node at (10.3,-0.25) {1};
\end{tikzpicture}
Figure 1
\end{center}

\begin{fact}\label{maximal family}
{\em
Given $0<\delta<1$, there exists $(v_j,v_j^*)\in \widetilde{W}\times \widetilde{W}^{*}$ such that
\begin{enumerate}[(a)]
\item $\Vert v_j\Vert=\Vert v_j^*\Vert=v_j^*(v_j)=1$ for all $j$.
\item $\vert v_j^*(v_k)\vert\le\delta$ if $j<k$.
\item $\sup\limits_j\vert v_j^*(v)\vert\geq\delta\Vert v\Vert$ for all $v\in \widetilde{W}$.
\end{enumerate} }
\end{fact}
\begin{proof}

Fix $0<\delta<1$. Take $\{V_n\}_{n\geq 1}$  an increasing sequence of subspaces with $\dim(V_n)=n$ and so that $\bigcup^{\infty}_{n=1}V_n$ is dense in $\widetilde{W}$. We will prove by induction that for each $n\in\N$ there is a maximal set $F_n=\{v_{1},\dots,v_{k_n}\}\subset V_n$ and $\{v^{*}_{1},\dots,v^{*}_{k_n}\}\subset V^*$ so that $(a)$ and $(b)$ are satisfied.

For $n=1$ just take a vector $v_1$ of norm one and $v_1\in \widetilde{W}^*$, with $v_1^*(v_1)=1$ and define $F_1=\{v_1\}$. Assuming the result is true for $n$, let us construct $F_{n+1}$. Starting from $F_n=\{v_1,\dots,v_{k_n}\}$ define a maximal set $F_{n+1}=F_n\cup\{v_{k_n+1},\dots,v_{k_{n+1}}\}$ in $V_{n+1}$, together with its supporting functionals so that properties $(a)$ and $(b)$ are satisfied. Note that $k_{n+1}$ must be finite because for every $j<i$ we have 

$$\Vert v_{j}-v_{i}\Vert \geq v^{*}_{j}(v_{j}-v_{i})\geq 1-\delta.$$

This means that $F_{n+1}$ is a set points in the unit sphere of a finite dimensional space that are $(1-\delta)$ separated. Hence by compactness this set of points must be finite. 

Consider $\bigcup^{\infty}_{n=1} F_n=\{v_j\}_{j\geq 1}$, $F_n\subset V_n$, and $\{v^{*}_{j}\}_{j\geq 1}$ their correspondent supporting functionals. Let us check that for every $v\in \widetilde{W}$

$$\sup\limits_j\vert v_j^*(v)\vert\geq\delta\Vert v\Vert.$$

By contradiction suppose there is some $v\in \widetilde{W}$ with $\Vert v\Vert =1$ and some $0<\delta'<\delta$ for which  $|v^{*}_{j}(v)|\leq\delta'$ for all $j\in\N$. By density of $\bigcup^{\infty}_{n=1}V_n$ in $\widetilde{W}$, there is some $n\in\N$ and some vector $w\in V_n$ with $\Vert w\Vert=1$ so that $\Vert w-v\Vert\leq (\delta-\delta')/ 2$. Hence for all $j\in\N$ we would have
$$|v^{*}_{j}(w)|\leq |v^{*}_{j}(v)|+|v^{*}_{j}(w-v)|<\delta,$$ 
which contradicts the maximality of the family $F_n$.
\end{proof}

We start now with the proof of Lemma \ref{tiling of W_k}.

\mn\rm Let $e\in \Ker (Q)$ and $e^*\in W^*$ such that 
$\Vert e\Vert=\Vert e^*\Vert=e^*(e)=1$. 
If $u\in \Ker (Q)$, then $u=e^*(u)e$. For each $w\in W$, since $w-Qw\in \Ker(Q)$, we have $w=Qw+e^*(w-Qw)e=Qw-e^*(Qw)e+e^*(w)e$.
Hence:
\begin{equation}\label{estimating the norm by the norm of the projection}
\Vert w\Vert\le\vert e^*(w)\vert+2\Vert Qw\Vert
\end{equation}

\noindent
Let $w_j^*=v_j^*\circ Q$ and $w_j=v_j-e^*(v_j)e$ : we have $Qw_j=v_j$ and $e^*(w_j)=0$.

\noindent
Let $\pi_j:W\to\R^2$ defined by $\pi_j(w)=(e^*(w),w_j^*(w))$.
We define
\begin{align*}
&H_0=\bigcap_j\pi_j^{-1}(U_0),\\
&H_j^p=\pi_j^{-1}(U_p)\cap\bigcap_{i<j}
\pi_i^{-1}(U_0)\;\;\text{for}\;1\le p\le 4\; \text{and }\;j\ge 1,\\
&H_n=\{w\in W:\,\vert e^*(w)-4n\vert\le 2\}\;\;\text{for}\;\;n\;\;\text{integer,}\;\; n\ne 0.
\end{align*}

The sets $H_0$ and $H_j^p$ for $1\le p\le 4$ and $j\ge 1$ 
form a tiling of $T=\{w\in W:\,\vert e^*(w)\vert\le 2\}$.
Indeed they are closed and convex as intersections of preimages 
of closed convex sets by continuous linear mappings.
They cover $T$ since for each $x\in T$, either $\pi_jx\in U_0$ for all $j$, and so $x\in H_0$,
or there exists $j$ such that  $\pi_jx\notin U_0$, and, if we take the smallest such $j$,
there exists $1\le p\le 4$ such that $\pi_jx\in U_p$, and we get $x\in H_j^p$.
The intersection of any two such sets has empty interior.
For instance, if $j<k$, then\quad 
$H_j^p\cap H_k^q\subset \pi_j^{-1}(U_p\cap U_0)$
has empty interior.
For $n$ integer, $n\ne 0$, the sets $H_n$ are closed and convex as preimages 
of closed intervals by a continuous linear functional. 
The sets $H_n$ for $n$ integer, and $H_j^p$ for $1\le p\le 4$ and $j\ge 1$ 
form a tiling of $W$. Indeed, they obviously form a covering of $W$,
and the intersection of $H_n$ with any other tile is included in $\{w\in W:\,\vert e^*(w)-4n\vert= 2\}$,
hence has empty interior.

\begin{enumerate}
\item We have $B(0,1)\subset H_0$ because for each $j$, $\pi_j(B(0,1))\subset [-1,1]^2\subset U_0$.

\noindent
We have $H_0\subset B(0,R_0)$ with $R_0=2+4/\delta$. If $w\in H_0$, then $\vert w_j^*(w)\vert\le 2$ for all $j$ (because $\pi_j(H_0)\subset U_0\subset 2D$), and hence
$$
\delta\Vert Qw\Vert \leq \sup_j |v^{*}_{j}(Qw)|=\sup_j |w^{*}_{j}(w)|\leq 2.
$$
Therefore
$$
\Vert w\Vert\le\vert e^*(w)\vert+2\Vert Qw\Vert\le 2+\frac{4}{\delta}
= R_0.
$$

\item If we define $h_j^1=ae+bw_j$, we have $\Vert Qh_j^1\Vert=b\Vert v_j\Vert=b\le 1-r$.

\item We also have $B_{W}(h_j^1,r)\subset H_j^1$. Indeed, $\pi_j(B(h_j^1,r))\subset (a,b)+rD\subset U_1$,
and, if $i<j$, $\pi_ih_j^1=(a,t)$ with $\vert t\vert= b\vert w_i^*(w_j)\vert=b\vert v_i^*(v_j)\vert\le\delta b$, thus
$\pi_i(B(h_j^1,r))\subset (a,t)+rD\subset U_0$.

\item Finally, if $h\in H_j^1$,
$$
\Vert h-h_j^1\Vert \le\vert e^*(h-h_j^1)\vert+2\Vert Q h_j^1\Vert + 2\Vert Qh\Vert
\le 2+2+2\Vert Qh\Vert.
$$
\end{enumerate}

The proofs of $(2),(3)$ and $(4)$ with $h_j^1$ replaced by $h_j^2=ae-bw_j$, 
$h_j^3=-ae+bw_j$ or $h_j^4=-ae-bw_j$ are the same. If we define, for $n\ne 0$, $h_n=4ne$,
we clearly have $Qh_n=0$, $B_{W}(h_n,r)\subset H_n$ and 
$\Vert h-h_n\Vert \le\vert e^*(h-h_n)\vert + 2\Vert Qh\Vert
\le 2+2\Vert Qh\Vert$, thus $(2),(3)$ and $(4)$ are also satisfied in this case. 
\end{proof}

\begin{proof}[Proof of Theorem \ref{tiling Schauder basis}]
\hspace{6cm}

We fix $(e_k)_{k\geq 0}$ a Schauder basis of $X$. For each $k$, we define 
$$V_k=\espan\{e_0,\cdots,e_{k}\}\; ; \; W_k=\overline{\espan}\{e_m: \,m> k\},$$ 
$P_k$ the projection of $X$ onto $V_k$ defined by 
$P_k(\sum\limits_{n=0}^\infty x_ne_n)=\sum\limits_{n=0}^{k} x_ne_n$,
and $Q_k=I-P_k$. Without loss of generality, we can assume that for each $k$, $\Vert Q_k\Vert=1$.
Indeed, it is enough to replace if necessary the norm of $X$ by the equivalent norm defined by
$\vert x\vert =\sup\{\Vert Q_k(x)\Vert:\,k\in\N\}$. We now fix $k\geq 0$.

\sn
By Lemma \ref{tiling of V_k}, let $\{D_i^k:\,i\geq 0\}$ be a tiling of the finite dimensional space $V_k$ and points 
$d_i^k\in D_i^k$ such that for each $i$, 
$B(d_i^k,r)\subset D_i^k\subset B(d_i^k,2r)$.

\sn
By Lemma \ref{tiling of W_k}, applied for $W=W_k$ and $\widetilde{W}=W_{k+1}$, let $\{H_j^k:\,j\geq 0\}$  be a tiling of the infinite dimensional space $W_k$ and points 
$h_j^k\in H_j^k$ such that:
\begin{enumerate}
\item $B_{W_k}(0,1)\subset H_0^k\subset B_{W_k}(0,R_0)$ 

\item $\Vert Q_{k+1}h_j^k\Vert\le 1-r$

\item $B_{W_k}(h_j^k,r)\subset H_j^k$

\item for all $h\in H_j^k$, $\Vert h-h_j^k\Vert\le 4+2\Vert Q_{k+1}h\Vert$.
\end{enumerate}

We now define the tiling of $X$, where $(i,j,k)\in\N^3$ satisfy $k\ge 1\Rightarrow j\ge 1$:
$$
C_{i,j}^k=P_k^{-1}(D_i^k)\cap Q_k^{-1}(H_j^k)\cap\bigcap_{m>k} Q_m^{-1}(H_0^m).
$$
The sets $C_{i,j}^k$ are closed because they are intersections of preimages of closed sets by continuous linear mappings. The sets $H_j^k$ and $H_0^m$ are convex, but the sets $D_i^k$ are only starshaped. Since $C^{k}_{i,j}=D^{k}_{i}+(H^{k}_{j}\cap\bigcap_{m>k}Q^{-1}_{k,m}(H^{m}_{0}))$, where $Q_{k,m}=Q_m|_{W_k}$, the sets $C^{k}_{i,j}$ will be starshaped because they are a sum of a starshaped set 
 with a convex set.

We now define $c_{i,j}^k:=d_i^k+h_j^k$.

\begin{itemize}
\item We have $C_{i,j}^k\subset B(c_{i,j}^k,R)$, with $R=2r+4+2R_0$. Indeed, if $x\in C_{i,j}^k$,
condition $(4)$ and $Q_{k+1}x\in H_0^{k+1}\subset B_{W_{k+1}}(0,R_0)$ imply:
$$
\Vert x-c_{i,j}^k\Vert\le\Vert P_kx-d_i^k\Vert+\Vert Q_kx-h_j^k\Vert\le
2r+4+2\Vert Q_{k+1}x\Vert =2r+4+2R_0
$$

\item Let us now prove that $B(c_{i,j}^k,r)\subset C_{i,j}^k$.

\noindent
$P_k(c_{i,j}^k)=d_i^k\quad\Rightarrow\quad 
P_k(B(c_{i,j}^k,r))\subset B_{V_k}(d_i^k,r)\subset D_i^k$,

\noindent
$Q_k(c_{i,j}^k)=h_j^k\quad\Rightarrow\quad 
Q_k(B(c_{i,j}^k,r))\subset B_{W_k}(h_j^k,r)\subset H_j^k$.
Then, if $m>k$, $\Vert Q_m(c_{i,j}^k)\Vert=\Vert Q_m(h_j^k)\Vert=
\Vert Q_mQ_{k+1}(h_j^k))\Vert\le\Vert Q_{k+1}(h_j^k)\Vert\le 1-r
\,\,\,\Rightarrow\,\,\, Q_m(B(c_{i,j}^k,r))\subset B(0,1)\subset H_0^m$.

\item The sets $C_{i,j}^k$ cover $H$. Let us fix $x\in H$.
Since $\Vert Q_k(x)\Vert\to 0$, there exists $k$ such that for every $m>k$,
$Q_mx\in H_0^m$. We choose $k$ minimal with respect to this property.
Observe that if $k\ge 1$, then $Q_kx\notin H_0^k$.
Since $\bigcup\limits_i D_i^k=V_k$, there exists $i$ such that $P_kx\in D_i^k$.

\noindent 
Since $\bigcup\limits_j H_j^k=W_k$, there exists $j$ such that $Q_kx\in H_j^k$,
and if $k\ge 1$, then $j\ge 1$.

\item The sets $C_{i,j}^k\cap C_{p,q}^m$ have empty interior if $(i,j,k)\ne(p,q,m)$. Indeed,

\noindent
If $k=m$ and $j=q$ and $i\ne p$, \quad 
$C_{i,j}^k\cap C_{p,j}^k\subset P_k^{-1}(D_i^k\cap D_p^k)$
has empty interior.

\noindent
If $k=m$ and $j\ne q$, \quad 
$C_{i,j}^k\cap C_{p,q}^k\subset Q_k^{-1}(H_j^k\cap H_q^k)$
has empty interior.

\noindent
If $k<m$, necessarily $q>0$ and\quad 
$C_{i,j}^k\cap C_{p,q}^m\subset Q_m^{-1}(H_0^m\cap H_q^m)$
has empty interior.
\end{itemize}
\end{proof}

\begin{rem} {\em
The Figure 1 in Fact \ref{tiling of R^2} gave us the values $r=\frac 1 6$ and $\delta=\frac 1 5$, so $R_0=2+{4\over\delta}=22$ and $R=2r+4+2R_0={145\over 3}$. Then we have the relation ${R\over r}=290$.

If our aim was to get a better relation $R/r$, we can improve the Figure 1 by letting $(a,b)=(5.25,0.75)$, $r=0.25$, $\delta=0.25$ and $U_0=\{(x,y):\,|x|+8|y|\leq 9,\,|x|\leq 5.5\}$, $U_1=\{(x,y):\,x\geq 0, y\geq 0, x+8y\geq 9\}$ (and $U_2,U_3,U_4$ similarly). The following picture represents this.


\begin{center}
\begin{tikzpicture}[scale=0.96]
\draw (8.5,1.6363)  -- (16.5,0.6363) -- (16.5, -0.6363) -- (8.5,-1.6363) -- (0.5,-0.6363) -- (0.5,0.6363) -- cycle;
\draw[dashed] (0,0) -- (17,0);
\draw[dashed] (8.5,-2.5) -- (8.5,2.5);
\draw[dashed] (0,1.4545) -- (17,1.4545);
\draw (0.5,2.5) -- (0.5,-2.5);
\draw (16.5,2.5) -- (16.5,-2.5);
\draw (16.5,0.72725) -- (16.5,1.4545) -- (15.77275,1.4545) -- (15.77275,0.72725) -- cycle;
\draw (16.5,0.6363) -- (16.5,-0.09095) -- (15.77275,-0.09095) -- (15.77275,0.6363) -- cycle;
\draw (8.5,1.6363) -- (8.5,2.5);
\draw (8.5,-1.6363) -- (8.5,-2.5);
\draw (16.136375,1.090875) -- (16.136375,0.272675);
\draw[dashed] (9.9545,1.4545) -- (9.9545,-1.4545) -- (7.0455,-1.4545) -- (7.0455,1.4545) -- cycle;
\node at (0,1.7) {1};
\node at (0,0.25) {0};
\node at (9,0.5) {\huge $U_0$};
\node at (13,2.3) {\huge $U_1$};
\node at (13,-2.3) {\huge $U_2$};
\node at (4,2.3) {\huge $U_3$};
\node at (4,-2.3) {\huge $U_4$};
\node at (16.136375,1.24) {\scriptsize (a,b)};
\node at (7.95,1.9) {1.125};
\node at (16.85,-0.25) {5.5};
\node at (10.17,-0.3) {1};
\end{tikzpicture}
Figure 2
\end{center}
\vspace{0.3cm}

In this manner, some of the estimates in the previous proof are slightly modified. Using inequality \eqref{estimating the norm by the norm of the projection} we get $R_0=5.5+2\left( {1.125\over 0.25}\right) =14.5$, and condition $(4)$ in Lemma \ref{tiling of W_k} becomes $\Vert h-h_j\Vert\leq 5.5 +2+\Vert Qh\Vert$ for all $h\in H_j$, so $R=2r+7.5+2R_0=37$ and $R/r=148$. Moreover, in the particular case that we work with a $1$-unconditional basis (note that any Banach space with unconditional basis can be renormed so that it is $1$-unconditional), inequality \eqref{estimating the norm by the norm of the projection} becomes 
$$\Vert w\Vert \leq|e^*(w)|+\Vert Qw\Vert$$ and therefore $R_0=5.5+{1.125\over 0.25}=10$, $R=2r+6.5+R_0=17$ and $R/r= 68$.

Let us mention that David Preiss for the case of the separable Hilbert space is able to get $R/r=15$.}
\end{rem}

\section{Tiling of the unit sphere}\label{Tiling of the unit sphere}

\begin{defn}
{\em
Let $X$ be a Banach space.  A tiling of a subset $F\subset X$, is a family $\{T_{\alpha}\}\subset F$ of closed sets with non-empty relative interior in $F$, that covers $F$, and such that, if $\alpha\ne \beta$, then $T_\alpha\cap T_\beta$ has empty  relative interior with respect to $F$.

$F\subset X$ is said to admit normal tilings if for every $\varepsilon>0$ there exists $\rho>0$ and a tiling $\{T_\alpha\}$ of $F$, so that for some suitable points $c_\alpha\in T_\alpha$ we have
$$B(c_\alpha,\rho)\cap F\subset T_\alpha\subset B(c_\alpha,\varepsilon)\cap F.$$
}
\end{defn}

\begin{defn}
{\em A tiling  $\{S_{\alpha}\}$ of the sphere $S_X=\{x\in X:\,\Vert x\Vert=1\}$ will be called convex if there exists a convex tiling of $X$, $\{C_\alpha\}$, so that $S_\alpha = C_\alpha\cap S_X$.

}
\end{defn}

\begin{defn}
{\em 
A Banach space $X$ is uniformly convex if the modulus of convexity
$$\delta(\varepsilon)=\inf\{1-\Big\Vert {x+y\over 2}\Big\Vert:\,\Vert x\Vert,\Vert y\Vert\leq 1,\,\Vert x-y\Vert \geq \varepsilon\}$$
satisfies $\delta(\varepsilon)>0$ for every $\varepsilon>0$.
In particular if $x,y\in B_X$ are so that $\Vert {x+y\over 2}\Vert \geq 1-\delta(\varepsilon)$ then  $\Vert x-y\Vert\leq\varepsilon$.}
\end{defn}

\begin{thm}\label{tiling the sphere for uniformly convex spaces}
Let $X$ be a separable uniformly convex Banach space. Then the unit sphere $S_X=\{x\in X:\,\Vert x\Vert=1\}$ admits convex normal tilings.
\end{thm}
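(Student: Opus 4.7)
Plan. The plan is to adapt the dual Voronoi construction of Lemma \ref{tiling of V_k} to the sphere, using uniform convexity to control the cap diameters. Given $\varepsilon > 0$, I would choose $\eta \in (0, 2\delta(\varepsilon)]$ and, by separability, pick a maximal $\eta$-separated family $\{d_i\}_{i \in \N} \subset S_X$ via Zorn. Since $X$ is uniformly convex, $X^*$ is strictly convex, so each $d_i$ has a unique norming functional $d_i^* \in S_{X^*}$ with $d_i^*(d_i) = 1$. I would then define the closed convex cones
\[
C_i := \{x \in X : d_i^*(x) \geq d_j^*(x) \text{ for every } j \in \N\},
\]
each an intersection of half-spaces through the origin. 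For $i \neq j$ the intersection $C_i \cap C_j$ lies in the hyperplane $\{(d_i^* - d_j^*)(x) = 0\}$ and so has empty interior, making the pairwise-disjoint-interior condition for a tiling automatic. Setting $S_i := C_i \cap S_X$, the goal is to show that $\{C_i\}$ is a convex tiling of $X$ and that $\{S_i\}$ is the desired normal tiling of $S_X$.

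The two-sided diameter estimate should be a quick consequence of uniform convexity. For the outer radius: given $x \in S_i$, maximality of the net provides some $i_0$ with $\|x - d_{i_0}\| < \eta$, hence $d_{i_0}^*(x) \geq 1 - \eta$, and therefore $d_i^*(x) \geq 1 - \eta$ as well; combining with $d_i^*(d_i) = 1$ yields $\|(x + d_i)/2\| \geq 1 - \eta/2$, and uniform convexity together with the choice $\eta \leq 2\delta(\varepsilon)$ gives $\|x - d_i\| \leq \varepsilon$. For the inner radius, I would pick $\rho > 0$ with $\rho < \min\{\eta/2,\, 2\delta(\eta/2)\}$. Whenever $x \in S_X$ satisfies $\|x - d_i\| < \rho$, one has $d_i^*(x) \geq 1 - \rho$, while for any $j \neq i$ the separation forces $\|x - d_j\| \geq \eta - \rho > \eta/2$; the same UC inequality in the reverse direction gives $d_j^*(x) \leq 1 - 2\delta(\eta - \rho) < 1 - \rho \leq d_i^*(x)$, so $x \in C_i$ and hence $x \in S_i$.

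The hard part will be the covering property of $\{C_i\}$: showing that for every $x \in X$ the supremum $\sup_j d_j^*(x)$ is actually attained, so that $x$ does lie in some $C_i$. When this supremum equals $\|x\|$, the midpoint argument above shows any approximating sequence $d_{j_k}$ must converge in norm to $x/\|x\|$, which contradicts the $\eta$-separation of the net. The delicate case is when the supremum is strictly smaller than $\|x\|$, which in principle can occur in infinite dimensions; I expect to handle it either by combining reflexivity of $X$ (implied by uniform convexity) with uniqueness of norming functionals and the weak-$*$ compactness of $B_{X^*}$ to rule out unattained limits, or by slightly enlarging the family with a single ``leftover'' convex cone that absorbs the exceptional points without affecting the estimates on $S_X$, since by maximality of the net one always has $\sup_j d_j^*(x) \geq 1 - \eta$ for $x \in S_X$.
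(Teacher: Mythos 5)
Your local estimates (inner and outer radius of each cell $S_i$) are correct, but the covering step is not a technical loose end that your two suggested fixes will close: it is where the construction actually breaks. In a separable infinite-dimensional Hilbert space, for $x,d\in S_X$ one has $\Vert x-d\Vert^2=2-2\langle x,d\rangle$, so $\sup_j d_j^*(x)$ is attained exactly when $x$ has a nearest point in the discrete set $D=\{d_j\}$; and, as the introduction of the paper recalls (citing \cite[Proposition 2.1]{FoLi}), the Voronoi construction from a discrete proximinal set is impossible in the separable Hilbert space because such sets are not proximinal. Normalizing a point with no nearest point in $D$ produces a point of $S_X$ lying in no cell $C_i$, so your cells genuinely fail to cover the sphere. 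Neither proposed repair works: weak (equivalently weak-$*$) compactness of $B_{X^*}$ only yields a cluster functional $f$ with $f(x)=\sup_j d_j^*(x)$, and nothing forces $f$ to be one of the $d_j^*$; and the exceptional points are scattered over the whole sphere (each is merely $\eta$-close to \emph{some} $d_j$, a different one for different points), so a single ``leftover'' convex cone containing them all would have diameter comparable to that of $S_X$ and destroy normality.

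The paper's proof avoids the argmax entirely. Starting from a family $(x_j,f_j)$ as in Fact \ref{maximal family}, so that $\sup_j|f_j(x)|\ge r\Vert x\Vert$, and a threshold $r'<r$, it assigns $x$ to the \emph{first} index $j$ at which $|f_j(x)|$ exceeds the threshold (together with a sign $p$), i.e. $H_j^p=f_j^{-1}(U_p)\cap\bigcap_{i<j}f_i^{-1}(U_0)$. A first such index exists as soon as the supremum exceeds $r'$, so the covering of $S_X$ is automatic, and the ordering still gives pairwise disjoint relative interiors; uniform convexity is then used, much as in your outer-radius estimate, to bound the tiles from outside and to place centres $h_j^p$ carrying interior balls. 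If you want to keep a Voronoi-flavoured argument, you must replace the condition ``$d_i^*(x)\ge d_j^*(x)$ for all $j$'' by an ordered or thresholded selection of this kind.
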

\begin{proof}
Take $\varepsilon\in (0,1)$, let $\delta=\delta(\varepsilon /2)>0$ be the modulus of convexity and fix $(1-2\delta)<r'<r<1$.

According to Fact \ref{maximal family} there is a maximal family $(x_j,f_j)\subset X\times X^*$ so that $\Vert x_j\Vert=\Vert f_j\Vert =f_j(x_j)=1$ and $|f_j(x_i)|\leq r$ if $j< i$.  In particular $\sup_{i}|f_{i}(x)|\geq r\Vert x\Vert$. \\

Name $U_0=[-r',r']$,  $U_1=(-\infty,-r']$ and $U_2=[r',\infty)$, and define
\begin{align*}
&H_0=\bigcap_{i\geq 1}f^{-1}_{i}(U_0)\\
&H^{p}_{j}=f^{-1}_{j}( U_p)\cap\bigcap_{i<j}f^{-1}_{i}(U_0)\;\;\text{for}\; p=1,2\;\text{ and}\;j\geq 1 .
\end{align*}
It is easy to see that these sets form a convex tiling of $X$ with unbounded tiles $H^{p}_{j}$ and $B(0,r')\subset H_0\subset B(0,{r'\over r}).$
To check the second inclusion note that if $x\in H_0$ then  $\Vert x\Vert\leq {1\over r} \sup_{i}|f_{i}(x)|\leq{r'\over r}$.\\

Define $\rho=r'{1-r\over 1+r}$ and observe that if we let $r,r'$ be very close to $1-2\delta$ then $\rho\sim {1-2\delta\over 1-\delta}\delta$ behaves asymptotically as $\delta=\delta({\varepsilon\over 2})$, whenever $\varepsilon$ tends to zero. We claim that the sets $H^{p}_{j}\cap S_X$ form a convex tiling of $S_X$ and that for all $(j,p)\in \N^*\times\{1,2\}$ and suitable $h^{p}_{j}\in S_X$ we have
$$B(h^{p}_{j},\rho)\cap S_X\subset H^{p}_{j}\cap S_X\subset B(h^{p}_{j},\varepsilon)\cap S_X. $$ 

Let us start by defining $R={2r'\over 1+r}\in(r',{r'\over r}) $. The centres $\{h^{p}_{j}\}$ of our tiling are defined as follows: take $v_j\in \bigcap^{j}_{i=1} \Ker(f_i)$ with $\Vert Rx_j+v_j\Vert=1$, and let
\begin{align*}
h^{1}_{j}=-Rx_j-v_j\;;\;h^{2}_{j}=Rx_j+v_j.
\end{align*}
We have $\Vert h^{p}_{j}\Vert=1$, $f_j(h^{p}_{j})=(-1)^{p}R$ and $|f_i(h^{p}_{j})|\leq r R<1$ for $i<j$. The following properties hold:
\begin{enumerate}
\item  $\{H^{p}_{j}\}$ is a covering of $S_X$ and they have pairwise disjoint relative interiors in $S_X$.

For the covering property just observe that $H_0\cap S_X=\emptyset$. Because, as  we already explained above, if $x\in H_0$ then $\Vert x\Vert\leq{r'\over r}<1$. Also the sets $H^{p}_{j}\cap H^{q}_{i}\cap S_X$ have empty relative interior in $S_X$ if $(p,j)\neq (q,i)$. Indeed,
\begin{itemize}

\item If $i=j$ and $p\neq q$, $H^{p}_{j}\cap H^{q}_{j} \subset f^{-1}_{j}((-1)^p r')\cap f^{-1}_{j}((-1)^q r')=\emptyset.$

\item If $i>j$, $H^{p}_{j}\cap H^{q}_{i}\cap S_X \subset f^{-1}_{j}((-1)^p r')\cap S_X$ has empty relative interior. Otherwise there is $x\neq y$, $\Vert x\Vert=\Vert y\Vert=1$, $f_j(x)=f_j(y)=(-1)^p r'$ for which ${(x+y)\over \Vert x+y\Vert }\in f^{-1}_{j}((-1)^p r')\cap  S_X$, and this means that $f_j(x+y)=(-1)^p\ r'\Vert x+y\Vert=2(-1)^p r'$, so $\Vert x+y\Vert=2$, contradicting  the strict convexity of the space $X$.

\end{itemize}
\medskip

\item $B(h^{p}_{j},\rho)\subset H^{p}_{j}$.

Since $\rho=R-r'$ we have that if $\Vert y-h^{p}_{j}\Vert\leq \rho$ then,
\begin{align*}
&f_j(y)=f_j(y-h^{1}_{j})+f_j(h^{1}_{j})\leq \rho-R=-r'\;\Rightarrow\;f_j(y)\in U_1\\
&f_j(y)=f_j(y-h^{2}_{j})+f_j(h^{2}_{j})\geq -\rho+R=r'\;\Rightarrow\;f_j(y)\in U_2.
\end{align*}
Also for $p=1,2$, $i< j$,
$$|f_i(y)|=|f_i(y-h^{p}_{j}) +f_i(h^{p}_{j})|\leq \rho+r R= r'\;\Rightarrow\;f_i(y)\in U_0.$$

\item $S_X \cap H^{p}_{j}\subset B(h^{p}_{j},\varepsilon)$

It is enough to prove that $S_X \cap H^{p}_{j}\subset B((-1)^p x_j,{\varepsilon\over 2})$.
Let us take $\Vert x\Vert =1$ with $x\in H^{p}_{j}$ and by contradiction assume $\Vert x-(-1)^p  x_j\Vert\geq{\varepsilon\over 2}$. In that case by uniform convexity we know that $\Vert x+(-1)^p  x_j\Vert\leq 2(1-\delta).$
For the case $p=1$,
$$f_j( x)=f_j\left( x_j\right) +f_j\left( x-x_j\right) \geq 1-2(1-\delta)=(2\delta-1)>-r'$$
contradicting that $x\in H^{1}_{j}$. And for the case $p=2$,
$$f_j( x)=f_j\left( -x_j\right) + f_j\left( x+x_j\right) \leq -1+2(1-\delta)=(1-2\delta)<r'$$
contradicting that $x\in H^{2}_{j}$.
\end{enumerate}

\end{proof}


Notice that the assumption of separability is superfluous in the preceding argument  because the existence of such a maximal family $(x_\alpha,f_\alpha)\in X\times X^*$ from Fact \ref{maximal family} is also true for $X$ nonseparable (see \cite[Lemma 2.1]{FPZ} for a proof of this fact), and the rest of the proof follows naturally.



\section{Tiling of convex bodies}\label{Tiling of convex bodies}

A convex {\em body} $C\subset X$ is a closed bounded convex set with nonempty interior. In this case note that $C$ is said to admit normal tilings if for every $\varepsilon>0$ there exists $\rho>0$ and a tiling $\{T_\alpha\}$ of $C$, so that for some suitable points $c_\alpha\in T_\alpha$ we have
$$B(c_\alpha,\rho)\subset T_\alpha\subset B(c_\alpha,\varepsilon).$$

For a convex body $C$, a {\em slice} of it will be any non-empty closed convex subset of the form $\{x\in C: f(x)\geq \lambda\}$ where $f\in S_{X^*}$ and $\lambda\in\R$.

\begin{thm}\label{main theorem}
Let $X$ be an uniformly convex Banach space. Then every convex body $C$ admits convex normal tilings.
\end{thm}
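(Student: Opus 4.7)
The plan is to exploit uniform convexity twice: once to guarantee that narrow slices of $C$ have small diameter, and once to mimic the cut-and-slice construction of Theorem \ref{tiling the sphere for uniformly convex spaces} inside $C$. The tiles will be produced in successive layers peeled from the boundary inwards, with a small central piece as the last tile.

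I would first establish a slice--diameter lemma: after translating so that $B(0,r_0)\subset C\subset B(0,R_0)$, for every $\eta>0$ there exists $\alpha=\alpha(\eta,C)>0$ such that for every $f\in S_{X^{*}}$ the slice $\{x\in C:\,f(x)\geq\sup_{C}f-\alpha\}$ has diameter at most $\eta$. This is the standard consequence of uniform convexity: if $x,y$ lie in the slice, their midpoint $(x+y)/2\in C$ also does, and the modulus of convexity $\delta$ applied after rescaling $x,y$ into $B_X$ forces the midpoint to retreat too far from the hyperplane $\{f=\sup_C f\}$ unless $x$ and $y$ are already close.

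Given $\varepsilon>0$, fix $\alpha$ from the slice lemma with $\eta=\varepsilon/2$. I would then build a finite nested sequence $C=C_0\supset C_1\supset\cdots\supset C_N$ by $C_{n+1}=\{x\in C_n:\,f(x)\leq\sup_{C_n}f-\alpha\text{ for every }f\in S_{X^{*}}\}$, stopping at the first $N$ for which $C_N$ has diameter $\leq\varepsilon$; boundedness of $C$ makes $N$ finite. Each shell $C_n\setminus\mathrm{int}\,C_{n+1}$ is tiled by transposing the construction of Theorem \ref{tiling the sphere for uniformly convex spaces}: pick a maximal family $(y_j^n,g_j^n)\in\partial C_n\times S_{X^{*}}$ with $g_j^n(y_j^n)=M_j^n:=\sup_{C_n}g_j^n$ and a separation condition analogous to Fact \ref{maximal family}(b), and set $T_{n,j}=C_n\cap\{g_j^n\geq M_j^n-\alpha\}\cap\bigcap_{i<j}\{g_i^n\leq M_i^n-\alpha\}$. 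Each $T_{n,j}$ is closed convex, sits inside a slice of depth $\alpha$ (hence has diameter $\leq\varepsilon$ by the slice lemma), and by centring it at a point $c_{n,j}$ retracted slightly from $y_j^n$ along an inward direction, the same offset computation as in Theorem \ref{tiling the sphere for uniformly convex spaces} produces a ball $B(c_{n,j},\rho)\subset T_{n,j}$ for some $\rho>0$. Finally, $C_N$ itself is declared a single tile.

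The main obstacle is ensuring that $\rho$ is uniformly bounded below across all $N+1$ layers. This requires the slice constant $\alpha$, the maximal--family separation $r$, and the inradius of each $C_n$ to be controlled simultaneously; it is forced by $N$ being finite and by the fact that each inward peel strips a layer of thickness at most $\alpha$, so $C_n$ still contains a ball of radius $r_0-N\alpha>0$ (after tuning $\alpha$ against $r_0$) throughout the construction. Once this uniformity is secured, pairwise disjoint interiors are routine: two tiles of the same shell meet only on a hyperplane $\{g_j^n=M_j^n-\alpha\}$, and tiles from different shells meet only on the interface $\partial C_{n+1}\cap C_n$, both of which have empty interior in $C$.
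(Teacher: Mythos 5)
Your overall architecture --- peel thin layers from the boundary of $C$ inward, tile each layer by slices coming from a maximal family of functionals, and finish with one small central tile --- matches the paper's. But your key lemma, the ``slice--diameter lemma,'' is false for general convex bodies, and this breaks the diameter estimate for every tile. Uniform convexity controls shallow slices of the \emph{ball}, not of an arbitrary convex subset: take $X=\ell_2$, let $C=\{x\in B_X:\,x_1\le 0\}$ (a convex body) and $f=e_1^*$, so that $\sup_C f=0$. For every $\alpha>0$ the slice $\{x\in C:\,f(x)\ge -\alpha\}$ contains both $e_2$ and $-e_2$, hence has diameter $2$. Your justification fails exactly here: the hyperplane $\{f=\sup_C f\}$ need not be nearly tangent to the circumscribed ball, so the midpoint of two far-apart points of the slice is not forced to retreat far from that hyperplane (in the example the midpoint is $0$, which lies on it). The paper's Lemma \ref{building the tiles} avoids this by never slicing ``at depth $\alpha$ below $\sup_C f$''; instead it normalizes $C\subset B(0,1)$ and manufactures a slice that is entirely disjoint from the interior of $B(0,1-\delta)$, by Hahn--Banach separation of a small ball $B(y_0,r)\subset C$ (produced by a homothety argument using the inradius of $C$) from $B(0,1-\delta)$. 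Then any two points of the slice lie in $B(0,1)$ with midpoint of norm at least $1-\delta$, and the modulus of convexity of the \emph{norm} --- applied to the ambient ball, not to $C$ --- gives diameter at most $\varepsilon$. This is the idea your proposal is missing, and it is also what supplies the uniform inner ball $B(y_0,r)$ inside each tile, which you only assert by analogy with Theorem \ref{tiling the sphere for uniformly convex spaces}.

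A second, independent problem is your uniformity claim for $\rho$. You propose to keep $r_0-N\alpha>0$ by ``tuning $\alpha$ against $r_0$,'' but $N$ is of order $1/\alpha$, since you must strip essentially the whole body down to a set of diameter $\varepsilon$; hence $N\alpha$ is of the order of $\operatorname{diam}(C)$ and cannot be made small, so the bound is circular. The paper resolves this by renormalizing at each stage: it works with $C_k'=\gamma^{-(k-1)}C_k$, which again sits between an inner and the unit ball, applies the slicing lemma there to get tiles of inner radius $r_k$, and scales back to get inner radius $r_k\gamma^{k-1}$; since the number of stages $n$ with $\gamma^n\le\varepsilon$ is finite, the minimum over $k$ is positive. (There is also a smaller unaddressed point: that the depth-$\alpha$ slices from your maximal family actually cover the shell $C_n\setminus\operatorname{int}C_{n+1}$ requires a norming-type argument analogous to Fact \ref{maximal family}, which you do not supply.) As written, the proof does not go through.
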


We will first need two auxiliary results.

\begin{lem}\label{building the tiles}
Let $\varepsilon,\eta\in (0,1)$ and $\delta=\delta(\varepsilon)>0$ the modulus of convexity. Then for every convex set $C$ where $B(0,\eta)\subset C\subset B(0,1)$ and for every $x\in C$ with $\Vert x\Vert\geq \sqrt{1-\delta}$, there exists a slice $T$ containing $x$, containing a ball of radius $r=r(\delta,\eta)>0 $,  $diam (T)<\varepsilon$ and $T\cap \buildrel\circ\over B(0,1-\delta)=\emptyset$.
\end{lem}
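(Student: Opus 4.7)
The idea is to slice $C$ by a hyperplane parallel to a supporting hyperplane of $B(0,1)$ at (a point close to) $x/\Vert x\Vert$. By Hahn--Banach pick $f\in S_{X^{*}}$ with $f(x)=\Vert x\Vert$, fix a level
$$\lambda\in(1-\delta,\sqrt{1-\delta})$$
(for instance the midpoint $\lambda=\tfrac{1}{2}((1-\delta)+\sqrt{1-\delta})$; this interval is non-empty because $1-\delta<\sqrt{1-\delta}$ whenever $0<\delta<1$), and set
$$T=\{y\in C:f(y)\ge\lambda\}.$$
Then $T$ is a slice of $C$ containing $x$, since $f(x)=\Vert x\Vert\ge\sqrt{1-\delta}\ge\lambda$.

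Three of the four required properties should drop out of this single choice. Any $y\in T$ satisfies $\Vert y\Vert\ge f(y)\ge\lambda>1-\delta$, which yields $T\cap\interior{B}(0,1-\delta)=\emptyset$. For $y,z\in T\subset C\subset B(0,1)$, linearity of $f$ gives
$$\Bigl\Vert\tfrac{y+z}{2}\Bigr\Vert\ge f\Bigl(\tfrac{y+z}{2}\Bigr)\ge\lambda>1-\delta=1-\delta(\varepsilon),$$
and the contrapositive of the modulus-of-convexity inequality forces $\Vert y-z\Vert<\varepsilon$. A uniform version (pick $\varepsilon_{0}<\varepsilon$ with $\delta(\varepsilon_{0})\le 1-\lambda$, which exists since $1-\lambda<\delta(\varepsilon)$ and $\delta$ is non-decreasing) sharpens this into $\operatorname{diam}(T)\le\varepsilon_{0}<\varepsilon$.

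For the ball inside $T$, the plan is to exploit $B(0,\eta)\subset C$: convexity of $C$ gives
$$B(sx,(1-s)\eta)=sx+(1-s)B(0,\eta)\subset C\qquad\text{for every }s\in[0,1].$$
On such a ball $f(y)\ge sf(x)-(1-s)\eta$, which is $\ge\lambda$ exactly when $s\ge(\lambda+\eta)/(f(x)+\eta)$. Taking equality produces a ball of radius
$$r=(1-s)\eta=\frac{\eta\,(f(x)-\lambda)}{f(x)+\eta}$$
contained in $T$. The function $t\mapsto\eta(t-\lambda)/(t+\eta)$ is strictly increasing, so this $r$ is minimized over $f(x)\in[\sqrt{1-\delta},1]$ at $f(x)=\sqrt{1-\delta}$, giving a strictly positive
$$r(\delta,\eta)=\frac{\eta(\sqrt{1-\delta}-\lambda)}{\sqrt{1-\delta}+\eta}>0$$
depending only on $\delta$ and $\eta$.

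The main subtlety is the balancing act in the choice of $\lambda$: we need $\lambda>1-\delta$ (to obtain both the exclusion from $\interior{B}(0,1-\delta)$ and the uniform-convexity diameter bound) and $\lambda<\sqrt{1-\delta}$ (so that $x$ lies in $T$ with positive margin into which a ball whose radius is controlled by $\delta$ and $\eta$ alone can be inscribed). The two requirements are compatible precisely because of the strict inequality $1-\delta<\sqrt{1-\delta}$, and any fixed choice inside that interval makes all four estimates simultaneously quantitative and independent of $x$.
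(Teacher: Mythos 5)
Your proof is correct and reaches the conclusion by a genuinely more direct route than the paper. The paper works geometrically: it places a ball $B(y_0,r)$ on the radial segment through $x$, tangent from outside to $B(0,1-\delta)$, shows $B(y_0,r)\subset C$ by a homothety that pushes it into $B(0,\eta_\delta)$, and only then invokes Hahn--Banach separation of $B(y_0,r)$ from $B(0,1-\delta)$ to produce the slice; the defining functional is non-explicit there, and the fact that the resulting slice contains $x$ is left implicit (it follows because $x$ is a positive multiple, with factor at least $1$, of the point $x_0$ through which $y_0$ is defined). You instead start from the norming functional $f$ of $x$ and an explicit threshold $\lambda\in(1-\delta,\sqrt{1-\delta})$, which makes the membership $x\in T$, the exclusion of $\buildrel\circ\over B(0,1-\delta)$ and the diameter bound immediate, and you inscribe the ball via the Minkowski average $sx+(1-s)B(0,\eta)\subset C$ --- computationally the same convexity trick as the paper's homothety, but centred at $x$ rather than at $x_0=\sqrt{1-\delta}\,x/\Vert x\Vert$. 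Your construction needs no separation theorem, and your $r(\delta,\eta)$ is explicit and, as required, depends only on $\delta$ and $\eta$ (the monotonicity argument reducing to the worst case $f(x)=\sqrt{1-\delta}$ is correct).

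One small slip: your pairwise estimate gives $\Vert y-z\Vert<\varepsilon$ for each pair, hence only $\operatorname{diam}(T)\le\varepsilon$, and your attempted sharpening has the inequality reversed --- to rule out $\Vert y-z\Vert\ge\varepsilon_0$ you need $\varepsilon_0<\varepsilon$ with $\delta(\varepsilon_0)>1-\lambda$, so that $\Vert\tfrac{y+z}{2}\Vert\ge\lambda>1-\delta(\varepsilon_0)$ yields the contradiction; the condition $\delta(\varepsilon_0)\le 1-\lambda$ that you wrote gives nothing. This is harmless in context: the paper's own proof likewise only establishes $\operatorname{diam}(T)\le\varepsilon$, and that is all that is used in Lemma \ref{transfinite induction}.
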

\begin{proof}[Proof of Lemma \ref{building the tiles}]
Let $\eta_\delta=\min\{\eta,1-\delta\}>0$ and define 
\begin{align*}
&x_0={x\over \Vert x\Vert}\sqrt{1-\delta} \;\;\;\;;
&y_0=\left( {\eta_\delta+(1-\delta)\over \eta_\delta+\sqrt{1-\delta}}\right) x_0\;\;;\\
&r=\eta_\delta{\sqrt{1-\delta}-(1-\delta)\over \eta_\delta+\sqrt{1-\delta}}>0.
\end{align*} 
Obviously $x_0,y_0\in C$ by convexity of $C$.
We have the following properties:
\begin{enumerate}
\item $\buildrel\circ\over B(0,1-\delta)\cap B(y_0,r)=\emptyset$ .

Just note that
$$\Vert y_0\Vert-r=\sqrt{1-\delta}\,{\eta_\delta+(1-\delta)\over \eta_\delta+\sqrt{1-\delta}}-\eta_\delta\,{\sqrt{1-\delta}-(1-\delta)\over \eta_\delta+\sqrt{1-\delta}}=(1-\delta). $$
\item $B(y_0,r)\subset C$.

Indeed, since $x_0\in C$ and $B(0,\eta_\delta)\subset C$, by convexity it is enough to check that $h(B(y_0,r))\subset B(0,\eta_\delta)$, where $h$ is a suitable homothety $h:X\to X$.
Explicitly $h(y)=x_0+K(y-x_0)$ for $y\in X$, where $K={\eta_\delta\over r}={\eta_\delta+\sqrt{1-\delta}\over \sqrt{1-\delta}-(1-\delta)}>1$. Now take $y\in B(y_0,r)$ and write 
\begin{align*}
\Vert h(y)\Vert&=\Vert x_0+K(y-x_0)\Vert\leq\Vert x_0+K(y_0-x_0)\Vert+K\Vert y-y_0\Vert\\
&=\Vert x_0\Vert\left[1+K\left({\eta_\delta+(1-\delta)\over \eta_\delta+\sqrt{1-\delta}}-1\right) \right]+K\Vert y-y_0\Vert=0+K\Vert y-y_0\Vert\leq Kr=\eta_\delta.
\end{align*}
\end{enumerate}

To conclude the proof, by Hahn Banach separation theorem there is some hyperplane separating $B(y_0,r)$ and $B(0,1-\delta)$. This hyperplane defines a slice $T$ of $C$ including $x$, containing a ball of radius $r$ (that is $B(y_0,r)$) and with empty intersection with $\buildrel\circ\over B(0,1-\delta)$. It also satisfies that $diam \,(T)\leq \varepsilon$ since for any two points $x,y\in T$ we have $\Vert {x+y\over 2}\Vert\geq 1-\delta  $ and hence $\Vert x-y\Vert\leq \varepsilon$.

\end{proof}

\begin{lem}\label{transfinite induction}
Let $\varepsilon,\eta\in (0,1)$ and $\delta=\delta(\varepsilon)>0$ the modulus of convexity. Then for all closed convex set $C$, so that $B(0,\eta)\subset C\subset B(0,1)$, there exists $r=r(\delta,\eta)>0$, and a transfinite decreasing sequence of closed convex sets $\{D_\alpha\}_{\alpha\leq\mu}$ defined as:
\begin{enumerate}
\item $D_0=C$.
\item For $\alpha$ a limit ordinal $D_\alpha=\bigcap_{\beta<\alpha}D_\beta$.
\item For every $\alpha$, $D_{\alpha+1}=\overline{D_\alpha\setminus T_\alpha}$ where $T_\alpha$ is a slice of $D_\alpha $ such that
\begin{itemize}

\item For some  $c_\alpha$ we have $B(c_\alpha,r)\subset T_{\alpha}\subset B(c_\alpha,\varepsilon).$
\item $T_\alpha\setminus B(0,\sqrt{1-\delta})\neq\emptyset$.
\item $T_\alpha\,\cap \buildrel\circ\over B(0,1-\delta)=\emptyset$.
\end{itemize} 
\end{enumerate}
so that $B(0,\min\{\eta,1-\delta\})\subset D_\mu\subset B(0,\sqrt{1-\delta})$.

\end{lem}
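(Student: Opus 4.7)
The plan is to define $\{D_\alpha\}$ by transfinite recursion, invoking Lemma \ref{building the tiles} at every successor step. Set $\eta_\delta=\min\{\eta,1-\delta\}$ and start with $D_0=C$, maintaining throughout the invariant $B(0,\eta_\delta)\subset D_\alpha\subset B(0,1)$. At a successor stage $\alpha+1$, I would ask whether $D_\alpha$ contains a point $x$ with $\Vert x\Vert>\sqrt{1-\delta}$; if not, I halt and set $\mu=\alpha$, and if so, I apply Lemma \ref{building the tiles} to $D_\alpha$ (with $\eta_\delta$ playing the role of $\eta$) to produce a slice $T_\alpha$ through $x$ which contains some ball $B(c_\alpha,r)$ with $r=r(\delta,\eta_\delta)=r(\delta,\eta)$, has diameter less than $\varepsilon$, and satisfies $T_\alpha\cap\buildrel\circ\over B(0,1-\delta)=\emptyset$. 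Then I define $D_{\alpha+1}=\overline{D_\alpha\setminus T_\alpha}$. At a limit ordinal I simply set $D_\alpha=\bigcap_{\beta<\alpha}D_\beta$.

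Next I would verify that the stated structural properties and the invariant propagate. Closedness and convexity pass through intersections at limit stages. At a successor stage, $T_\alpha$ is cut out by a single half-space $\{f\ge\lambda\}$, so $D_\alpha\setminus T_\alpha = D_\alpha\cap\{f<\lambda\}$ and its closure is $D_\alpha\cap\{f\le\lambda\}$, again closed and convex. For the lower inclusion in the invariant, since $T_\alpha$ is disjoint from $\buildrel\circ\over B(0,1-\delta)\supset\buildrel\circ\over B(0,\eta_\delta)$, the open ball $\buildrel\circ\over B(0,\eta_\delta)$ sits inside $D_\alpha\setminus T_\alpha$, hence its closure sits inside $D_{\alpha+1}$. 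The upper inclusion is automatic since $D_{\alpha+1}\subset D_\alpha$. The three bullet conditions on $T_\alpha$ are exactly the output of Lemma \ref{building the tiles}, the middle one because $x\in T_\alpha$ has $\Vert x\Vert>\sqrt{1-\delta}$.

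The main obstacle is ensuring the recursion actually terminates at an ordinal $\mu$. The key observation is that the inscribed centers $\{c_\alpha\}$ form an $r$-separated family: if $\alpha<\beta$, then $B(c_\beta,r)\subset D_\beta\subset D_{\alpha+1}=\overline{D_\alpha\setminus T_\alpha}$, which meets neither $\buildrel\circ\over T_\alpha$ nor, in particular, $\buildrel\circ\over B(c_\alpha,r)$, forcing $\Vert c_\alpha-c_\beta\Vert\ge r$. Since every $c_\alpha$ lies in the bounded set $C\subset B(0,1)$, and any $r$-separated set in a Banach space has cardinality at most the density character of $X$, the collection of indices is bounded, so the transfinite construction must stop at some ordinal $\mu$. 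By the halting criterion, every $x\in D_\mu$ satisfies $\Vert x\Vert\le\sqrt{1-\delta}$, i.e.\ $D_\mu\subset B(0,\sqrt{1-\delta})$, while the preserved invariant gives $B(0,\eta_\delta)\subset D_\mu$, completing the conclusion.
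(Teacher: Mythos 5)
Your proof is correct and follows essentially the same route as the paper: transfinite recursion with Lemma \ref{building the tiles} applied at each successor stage, the invariant $B(0,\min\{\eta,1-\delta\})\subset D_\alpha$ propagated via the disjointness of $T_\alpha$ from $\buildrel\circ\over B(0,1-\delta)$, and termination forcing $D_\mu\subset B(0,\sqrt{1-\delta})$. In fact your termination argument (the inscribed centres $c_\alpha$ form an $r$-separated family in $B(0,1)$, so their number is bounded by the density character) is spelled out more explicitly than in the paper, which only remarks that the process must stop because each step removes points of norm greater than $\sqrt{1-\delta}$.
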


\begin{proof}[Proof of Lemma \ref{transfinite induction}]
Fix $\eta_\delta=\min\{\eta,1-\delta\}>0$ and let $r=r(\delta,\eta)>0$ be given by Lemma \ref{building the tiles}. We proceed by transfinite induction.

Suppose that $D_\beta$ has been defined for every $\beta\leq\alpha$.
If there is no $x\in D_\alpha$ with $\Vert x\Vert>\sqrt{1-\delta}$ define $D_{\alpha+1}=D_\alpha$ and stop.
Otherwise, take $x\in D_\alpha$ with $\Vert x\Vert>\sqrt{1-\delta}$ and apply Lemma \ref{building the tiles} to get a slice $T_{\alpha}$ of $D_\alpha$ including $x$, containing a ball of radius $r>0$, with diameter less than $\varepsilon$ and with disjoint intersection with $\buildrel\circ\over B(0,1-\delta)$. Define then $D_{\alpha+1}=\overline{D_\alpha\setminus T_{\alpha}}$, which satisfies clearly condition $(3)$.

The end of this transfinite sequence follows from property $(3)$ since at every step where $\alpha$ is a successor ordinal we are removing points $\Vert x\Vert>\sqrt{1-\delta}$, and hence we also get $B(0,\eta_\delta)\subset D_\mu\subset B(0,\sqrt{1-\delta})$.
\end{proof}

\begin{proof}[Proof of Theorem \ref{main theorem}]
 Without loss of generality we will assume that our convex body satisfies $B(0,\eta)\subset C\subset B(0,1)$.
For a given $\varepsilon\in (0,1)$ let $\delta=\delta(\varepsilon)>0$ be the modulus of convexity. For simplicity let as introduce the notation $\gamma=\sqrt{1-\delta}$ and take $n\in\N$ so that $\gamma^{n}\leq\varepsilon$. 

\begin{claim} \label{claim of the main proof for uniform convex spaces}
There is a decreasing sequence of closed convex sets $C=C_1\supset C_2\supset\cdots\supset C_n\supset C_{n+1}$ so that for every $k=1,\dots,n+1$,
\begin{enumerate}[(a)]
\item $B(0,\min\{\eta,\gamma^{k}\})\subset C_{k}\subset B(0,\gamma^{k-1}).$
\item For $k=1,\dots,n$ there exists a tiling of $\overline{C_k\setminus C_{k+1}}$ by slices $\{T^{k}_{\alpha}\}$ of $C_k$ so that $T^{k}_{\alpha}\cap T^{k}_{\beta}$ has empty interior for $\alpha\neq\beta$, and for some $c^{k}_{\alpha}$,
$$B(c^{k}_{\alpha},r_k\gamma^{k-1})\subset T^{k}_{\alpha}\subset B(c^{k}_{\alpha},\varepsilon).$$
\end{enumerate}
\end{claim}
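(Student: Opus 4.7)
The plan is to prove the claim by induction on $k$, using Lemma \ref{transfinite induction} after a suitable rescaling at each stage. For the base case $k=1$ set $C_1=C$, which satisfies (a) since $\gamma^0=1$. For the inductive step, assume $C_k$ has been constructed with $B(0,\min\{\eta,\gamma^k\})\subset C_k\subset B(0,\gamma^{k-1})$, and consider the rescaled body $\widetilde C_k:=\gamma^{-(k-1)}C_k$, which sits in $B(0,1)$ and contains $B(0,\eta_k)$ where $\eta_k:=\min\{\eta\gamma^{-(k-1)},\gamma\}$. Apply Lemma \ref{transfinite induction} to $\widetilde C_k$ with parameters $(\varepsilon,\eta_k)$; this yields a constant $r_k:=r(\delta,\eta_k)>0$, a decreasing transfinite sequence $\{\widetilde D_\alpha\}_{\alpha\le\mu}$ of closed convex sets, and slices $\widetilde T_\alpha$ with centres $\widetilde c_\alpha$ such that $B(\widetilde c_\alpha,r_k)\subset\widetilde T_\alpha\subset B(\widetilde c_\alpha,\varepsilon)$, together with $B(0,\min\{\eta_k,1-\delta\})\subset\widetilde D_\mu\subset B(0,\gamma)$.

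Define $C_{k+1}:=\gamma^{k-1}\widetilde D_\mu$, and for each $\alpha$ set $T^k_\alpha:=\gamma^{k-1}\widetilde T_\alpha$ with centre $c^k_\alpha:=\gamma^{k-1}\widetilde c_\alpha$. To check (a) at level $k+1$, note that $C_{k+1}\subset B(0,\gamma\cdot\gamma^{k-1})=B(0,\gamma^k)$, while the key identity $1-\delta=\gamma^2$ gives
\[
\gamma^{k-1}\min\{\eta_k,1-\delta\}=\gamma^{k-1}\min\{\eta\gamma^{-(k-1)},\gamma,\gamma^2\}=\min\{\eta,\gamma^{k+1}\},
\]
so $B(0,\min\{\eta,\gamma^{k+1}\})\subset C_{k+1}$ as required.

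For (b), the rescaled tiles satisfy $B(c^k_\alpha,\gamma^{k-1}r_k)\subset T^k_\alpha\subset B(c^k_\alpha,\gamma^{k-1}\varepsilon)\subset B(c^k_\alpha,\varepsilon)$ since $\gamma<1$. To see that $\{T^k_\alpha\}$ covers $\overline{C_k\setminus C_{k+1}}$, note that for any $x\in\widetilde C_k\setminus\widetilde D_\mu$ the least ordinal $\lambda$ with $x\notin\widetilde D_\lambda$ must be a successor $\alpha+1$ (since at a limit ordinal $\widetilde D_\lambda=\bigcap_{\beta<\lambda}\widetilde D_\beta$), and then $x\in\widetilde D_\alpha\setminus\widetilde D_{\alpha+1}\subset\widetilde T_\alpha$. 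Pairwise empty interior intersections follow because if $\alpha<\beta$ then $\widetilde T_\beta\subset\widetilde D_\beta\subset\widetilde D_{\alpha+1}=\overline{\widetilde D_\alpha\setminus\widetilde T_\alpha}$, whose interior is disjoint from the interior of $\widetilde T_\alpha$.

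The main technical point to monitor is the bookkeeping of the scaling factors: Lemma \ref{transfinite induction} contracts by a factor of at most $\gamma=\sqrt{1-\delta}$ per application, so a single such step is exactly what converts an outer radius $\gamma^{k-1}$ into $\gamma^k$ and an inner radius $\min\{\eta,\gamma^k\}$ into $\min\{\eta,\gamma^{k+1}\}$; the tight alignment depends on the algebraic identity $(1-\delta)=\gamma^2$. A minor wrinkle is terminological: the sets $T^k_\alpha$ are, strictly speaking, slices of the intermediate bodies $\widetilde D_\alpha$ rather than of $\widetilde C_k$ itself, but they remain closed convex sets with the required inner/outer ball estimates, which is all the ensuing proof of Theorem \ref{main theorem} will use to assemble the final convex normal tiling of $C$.
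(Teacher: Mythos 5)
Your argument is correct and follows essentially the same route as the paper's proof: induction on $k$, rescaling $C_k$ by $\gamma^{-(k-1)}$ so that Lemma \ref{transfinite induction} applies, and rescaling back, with the identity $1-\delta=\gamma^2$ doing the bookkeeping for condition (a). The extra details you supply (the successor-ordinal covering argument, the empty-interior check, and the remark that the tiles are slices of the intermediate bodies $\widetilde D_\alpha$) are consistent with, and slightly more explicit than, what the paper writes.
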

Assuming the claim is true we conclude the proof by defining $\rho=\min\{\eta,\gamma^{n+1},\min_{1\leq k\leq n}\{r_{k}\gamma^{k-1}\}\}$ and noticing that $ \{T^{k}_{\alpha}\} \cup C_{n+1}$ gives the required tiling of $C$.
\begin{itemize}
\item They are closed convex sets whose intersections lie in affine hyperplanes (by construction), hence they have empty interior.
\item $B(c^{k}_{\alpha},\rho)\subset T^{k}_{\alpha}\subset B(c^{k}_{\alpha},\varepsilon)$ because $\rho\leq r_k\gamma^{k-1}$ for every $k=1,\dots,n$.
\item $B(0,\rho)\subset B(0,\min\{\eta,\gamma^{n+1}\})\subset C_{n+1}\subset B(0,\gamma^n)\subset B(0,\varepsilon)$.
\end{itemize}

\end{proof}
\begin{proof}[Proof of Claim \ref{claim of the main proof for uniform convex spaces}]
We use induction and successive applications of Lemma \ref{transfinite induction}.
 
\medskip

$\blacktriangleright$ {\bf Step 1}: Given $\varepsilon,\delta$ and $C_1=C$ apply Lemma \ref{transfinite induction} to get a number $r_1>0$, a decreasing sequence of closed convex sets $\{D^{1}_{\alpha}\}\subset C_1$ and a sequence of vectors $\{c^{1}_{\alpha}\}$ satisfying $(1)$--$(3)$ of the lemma. Define $T^{1}_{\alpha}=\overline{D^{1}_{\alpha}\setminus D^{1}_{\alpha +1}}$ and  $\bigcap_{\alpha} D^{1}_{\alpha} =C_2$. We have
$$B(c^{1}_{\alpha},r_1)\subset T^{1}_{\alpha}\subset B(c^{1}_{\alpha},\varepsilon)\;\;;\;\;C_1\setminus C_2\subset \bigcup_{\alpha}T^{1}_{\alpha}\;\;;\;\; B(0,\min\{\eta,\gamma^2\})\subset C_2\subset B(0,\gamma).$$
And also the sets $T^{1}_{\alpha}\cap T^{1}_{\beta}$ for $\alpha\neq\beta$ have empty interior since they lie inside hyperplanes.

$\blacktriangleright$ {\bf Induction step}: Assume we have already defined the closed convex set $C_{k}$ so that
$$B(0,\min\{\eta,\gamma^{k}\})\subset C_{k}\subset B(0,\gamma^{k-1}).$$
Take $C'_{k}={1\over \gamma^{k-1}}C_k$. Note that $B(0,\min\{{\eta\over\gamma^{k-1}},\gamma\})\subset C'_k\subset B(0,1)$. Then apply Lemma \ref{transfinite induction} for $C'_k$ getting then a number $r_k>0$, a decreasing sequence of closed convex sets $\{(D^{k}_{\alpha})^{'}\}$ and a sequence of vectors $\{(c^{k}_{\alpha})^{'}\}$ satisfying $(1)$--$(3)$ of the lemma. Let us call $(T^{k}_{\alpha})^{'}=\overline{(D^{k}_{\alpha})^{'}\setminus (D^{k}_{\alpha+1})^{'}}$ and  $\bigcap_{\alpha} (D^{k}_{\alpha})^{'} =C'_{k+1}$. We have 
$$B((c^{k}_{\alpha})^{'},r_k)\subset (T^{k}_{\alpha})^{'}\subset B((c^{k}_{\alpha})^{'},\varepsilon)\;\;;\;\;C'_k\setminus C'_{k+1}\subset \bigcup_{\alpha}(T^{k}_{\alpha})^{'}\;\;;\;\; B(0,\min\{{\eta\over \gamma^{k-1}},\gamma,\gamma^2\})\subset C'_{k+1}\subset B(0,\gamma).$$

Multiplying by $\gamma^{k-1}$, if we define $c^{k}_{\alpha}=\gamma^{k-1}(c^{k}_{\alpha})^{'}$, $C_{k+1}=\gamma^{k-1} C'_{k+1}$ and $T^{k}_{\alpha}=\gamma^{k-1}(T^{k}_{\alpha})^{'}$ we get

$$B(c^{k}_{\alpha},r_k\gamma^{k-1})\subset T^{k}_{\alpha}\subset B(c^{k}_{\alpha},\varepsilon)\;\;;\;\;C_k\setminus C_{k+1}\subset \bigcup_{\alpha}T^{k}_{\alpha}\;\;;\;\; B(0,\min\{\eta,\gamma^{k+1}\})\subset C_{k+1}\subset B(0,\gamma^k).$$
And also the sets $T^{k}_{\alpha}\cap T^{k}_{\beta}$ for $\alpha\neq\beta$ have empty interior since they lie inside hyperplanes.
\end{proof}

\section{Normal tiling of the unit ball of $L_1$}\label{Normal tiling of the unit ball of $L_1$}

By an uniform homeomorphism  we mean a bijective uniform continuous mapping whose inverse is also uniform continuous.

\begin{lem}\label{uniform homeomorphisms preserve normality}
Let $X,Y$ be two Banach spaces and $F\subset X$, $G\subset Y$ uniformly homeomorphic. Then $F$ admits normal tilings if and only if $G$ admits normal tilings.
\end{lem}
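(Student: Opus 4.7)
The plan is to transport a normal tiling of $G$ back to $F$ using the uniform homeomorphism, and rely on the two moduli of continuity (one for $\phi$ and one for $\phi^{-1}$) to get the inner and outer radius estimates respectively. By symmetry of the statement it suffices to prove one direction: assuming $G$ admits normal tilings, produce one on $F$.

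Let $\phi : F \to G$ be a uniform homeomorphism, and denote by $\omega_\phi$ and $\omega_{\phi^{-1}}$ the respective moduli of uniform continuity, so that $\omega_\phi(t), \omega_{\phi^{-1}}(t) \to 0$ as $t \to 0^+$. Given $\varepsilon > 0$, I first choose $\varepsilon' > 0$ small enough that $\omega_{\phi^{-1}}(\varepsilon') \leq \varepsilon$. Using the assumption on $G$, pick $\rho' > 0$ and a tiling $\{S_\alpha\}$ of $G$ with centers $g_\alpha \in S_\alpha$ satisfying
\[
B(g_\alpha, \rho') \cap G \subset S_\alpha \subset B(g_\alpha, \varepsilon') \cap G.
\]
Next I select $\rho > 0$ small enough that $\omega_\phi(\rho) \leq \rho'$. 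The proposed tiling of $F$ is then
\[
T_\alpha := \phi^{-1}(S_\alpha), \qquad c_\alpha := \phi^{-1}(g_\alpha).
\]

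The verification splits in two parts. For the tiling structure, I use that $\phi : F \to G$ is a homeomorphism for the subspace topologies, so it carries closed sets to closed sets, sends the relative interior of $S_\alpha$ in $G$ bijectively onto the relative interior of $T_\alpha$ in $F$, and hence preserves the property that pairwise intersections have empty relative interior. The covering property is obvious from $\phi(F) = G$. For the normality estimates: if $x \in T_\alpha$, then $\phi(x) \in S_\alpha \subset B(g_\alpha, \varepsilon')$, and applying $\phi^{-1}$ with its modulus of continuity yields $\|x - c_\alpha\| \leq \omega_{\phi^{-1}}(\varepsilon') \leq \varepsilon$, giving $T_\alpha \subset B(c_\alpha, \varepsilon) \cap F$. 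Conversely, if $x \in F$ satisfies $\|x - c_\alpha\| < \rho$, then $\|\phi(x) - g_\alpha\| \leq \omega_\phi(\rho) \leq \rho'$, so $\phi(x) \in B(g_\alpha, \rho') \cap G \subset S_\alpha$, i.e.\ $x \in T_\alpha$; this establishes $B(c_\alpha, \rho) \cap F \subset T_\alpha$.

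I do not expect a serious obstacle here: the argument is essentially a soft transport via the two moduli, and the only point that needs care is the observation that a homeomorphism between the subspaces $F$ and $G$ preserves the three defining clauses of ``tiling of a subset'' (closedness, non-empty relative interior, and empty relative interior of pairwise intersections), which does not need any quantitative input. The uniform continuity is used solely to convert the metric estimates defining normality.
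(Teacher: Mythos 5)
Your proposal is correct and follows essentially the same route as the paper: transport the tiling through the uniform homeomorphism, use one modulus of continuity for the outer radius and the other for the inner radius, and note that a homeomorphism of $F$ onto $G$ preserves the topological clauses of the tiling definition. If anything, you are slightly more careful than the paper in first shrinking the outer radius $\varepsilon'$ so that $\omega_{\phi^{-1}}(\varepsilon')\leq\varepsilon$, which is exactly what the ``for every $\varepsilon>0$'' quantifier in the definition requires.
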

\begin{proof}
Let $h:F\to G$ be an uniform homeomorphism. We can define the modulus of continuity $\omega:(0,\infty)\to(0,\infty)$ for $h$ and $h^{-1}$ all together as
$$\omega(\delta)=\sup \left\lbrace  \sup\{\Vert h(x)-h(x')\Vert:\,\Vert x-x'\Vert\leq\delta\},\sup \{\Vert h^{-1}(y)-h^{-1}(y')\Vert:\,\Vert y-y'\Vert \leq\delta\}\right\rbrace  ,$$
where $\omega(\delta)\to 0$ as $\delta\to 0$.
Therefore we can write
\begin{equation}\label{uniform homeomorphisms}
\left\lbrace \begin{array}{l}
h(B(x,\delta))\subset B(h(x),\omega(\delta))\;\;\text{for all}\;x\in F; \vspace{0.3cm}\\
h^{-1}(B(y,\delta))\subset B(h^{-1}(y),\omega(\delta))\;\;\text{for all}\;y\in G  \Leftrightarrow\; B(h(x),\delta)\subset h(B(x,\omega(\delta)) \;\;\text{for all}\;x\in F.
\end{array}
\right.
\end{equation}

Assume now that $F$ admits normal tilings. Then there exists a tiling $\{C_\alpha\}$ of $F$ so that for some $R>r>0$ and some centres $\{c_\alpha\}$ we have
$$B(c_\alpha,r)\cap F\subset C_\alpha\subset B(c_\alpha,R)\cap F.$$
Since a homeomorphism sends sets with empty interior to sets with empty interior, by letting $\rho>0$ so that $\omega(\rho)\leq r$ and using \eqref{uniform homeomorphisms} we conclude that  $\{h(C_\alpha)\}$ is a tiling of $G$ such that
$$B(h(c_\alpha),\rho)\cap G\subset h(C_\alpha)\subset B(h(c_\alpha),\omega(R))\cap G .$$
\end{proof}

\begin{cor}\label{tiling the ball of l_1}
Let $\mu$ be a measure. Then the unit ball and the unit sphere of $L_1(\mu)=L_1$ admit normal tilings.
\end{cor}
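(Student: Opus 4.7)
The plan is to transport the tilings produced in Theorems \ref{main theorem} and \ref{tiling the sphere for uniformly convex spaces} from $L_2(\mu)$ to $L_1(\mu)$ via the Mazur map, invoking Lemma \ref{uniform homeomorphisms preserve normality}. Define $M\colon L_1(\mu)\to L_2(\mu)$ by $M(f)=\operatorname{sgn}(f)\,|f|^{1/2}$. A direct computation gives $\|M(f)\|_{2}^{2}=\int|f|\,d\mu=\|f\|_{1}$, so $M$ is a bijection of $B_{L_1}$ onto $B_{L_2}$ and of $S_{L_1}$ onto $S_{L_2}$, with inverse $M^{-1}(g)=\operatorname{sgn}(g)\,g^{2}$.

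The first step is to check that both $M$ and $M^{-1}$ are uniformly continuous on the relevant sets. The elementary pointwise inequalities, valid for all real $a,b$,
\[
\bigl|\operatorname{sgn}(a)|a|^{1/2}-\operatorname{sgn}(b)|b|^{1/2}\bigr|^{2}\le 2\,|a-b|,\qquad
|a-b|\le\bigl(|a|^{1/2}+|b|^{1/2}\bigr)\,\bigl|\operatorname{sgn}(a)|a|^{1/2}-\operatorname{sgn}(b)|b|^{1/2}\bigr|,
\]
yield, after integration and one application of the Cauchy--Schwarz inequality,
\[
\|M(f)-M(g)\|_{2}\le\sqrt{2\,\|f-g\|_{1}},\qquad
\|M^{-1}(\phi)-M^{-1}(\psi)\|_{1}\le\bigl(\|\phi\|_{2}+\|\psi\|_{2}\bigr)\,\|\phi-\psi\|_{2}.
\]
In particular, restricted to $B_{L_1}$ and $B_{L_2}$ (and hence to $S_{L_1}$ and $S_{L_2}$), $M$ is a uniform homeomorphism.

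Now $L_2(\mu)$ is a Hilbert space and is therefore uniformly convex. Theorem \ref{main theorem} then provides a convex normal tiling of the convex body $B_{L_2}$, and Theorem \ref{tiling the sphere for uniformly convex spaces}, together with the remark immediately following its proof that removes the separability hypothesis, provides a convex normal tiling of $S_{L_2}$. Applying Lemma \ref{uniform homeomorphisms preserve normality} to $M$ transports both tilings back to $L_1(\mu)$, producing the desired normal tilings of $B_{L_1}$ and $S_{L_1}$. Notice that the resulting tiles will in general be neither convex nor starshaped, since the Mazur map is nonlinear, but this is consistent with the definition of normal tiling used here. The only genuine content of the argument is the two pointwise Mazur-map estimates, which are classical, so no serious obstacle is expected.
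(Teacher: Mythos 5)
Your proposal is correct and follows essentially the same route as the paper: both transport normal tilings of $B_{L_2}$ and $S_{L_2}$ (obtained from Theorems \ref{main theorem} and \ref{tiling the sphere for uniformly convex spaces}, since $L_2(\mu)$ is uniformly convex) to $L_1(\mu)$ via the Mazur map and Lemma \ref{uniform homeomorphisms preserve normality}; you merely orient the map from $L_1$ to $L_2$ rather than the reverse and spell out the pointwise estimates, which are the same Lipschitz/H\"older bounds the paper quotes. Your explicit appeal to the remark removing separability from Theorem \ref{tiling the sphere for uniformly convex spaces} is a welcome detail the paper leaves implicit for nonseparable $L_2(\mu)$.
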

\begin{proof}
Define for $1\leq q<\infty$,
\begin{align*}
M:&\; L_2(\mu)\longrightarrow L_q(\mu)\\
&f(x)\longmapsto \text{sign}(f(x))|f(x)|^{2/q}.
\end{align*}
This mapping is called the Mazur map and was introduced in \cite{Mazur}. It was used to prove that the unit spheres of $L_2(\mu)$ and $L_q(\mu)$ were uniformly homeomorphic. The same mapping defines an uniform homeomorphism between the corresponding unit balls.
Furthermore for the case of $q=1$, $M$ is Lipschitz in the unit ball of $L_2(\mu)$, and  $M^{-1}$ is Hölder continuous in the whole $L_1(\mu)$. Namely (for the case of real valued functions),

\begin{align*}\label{Mazur map inclusions}
\left\lbrace \begin{array}{l}
\Vert M(f)-M(g)\Vert _1\leq 2\Vert f-g\Vert_2 \;\;\text{for all}\; f,g\in B_{L_2}(0,1) \vspace{0.3cm}\\
\Vert M^{-1}(f)-M^{-1}(g)\Vert_2\leq 2\Vert f-g\Vert^{1/2}_{1} \;\;\text{for all}\; f,g\in L_1(0,1).
\end{array}
\right.
\end{align*}
Consequently, since $L_2(\mu)$ admits normal tilings, and using Lemma \ref{uniform homeomorphisms preserve normality}, we conclude that the unit ball and unit sphere of $L_1(\mu)$ admit also normal tilings.
Note that in general the tiles making up the normal tiling of $B_{L_1}(0,1)$ will not be convex, nor starshaped.
\end{proof}

An important fact is that, contrary to what happens in the unit sphere and unit ball, the spaces $L_p(\mu)$ and $L_q(\mu)$ are not uniformly homeomorphic if $p\neq q$ (see \cite{Lin, Enflo}). To know more properties about the Mazur map and uniform homeomorphisms between Banach spaces we suggest the reader to see \cite[Chapter 9]{BenLin}.

\end{document}